\newtheorem{theorem}{Theorem}[section]
\newtheorem{lemma}[theorem]{Lemma}
\theoremstyle{definition}
\newtheorem{remark}[theorem]{Remark}
\numberwithin{equation}{section}
\title[Normalized solutions to lower critical Choquard equation]
      {Normalized solutions to lower critical Choquard equation with a local perturbation}
\author[Xinfu Li, Jianguang Bao, Wenguang Tang]{}
 \keywords{normalized solutions; existence and non-existence; multiplicity; lower critical Choquard equation;
 variational methods}
\thanks{*Corresponding author. Email Addresses:  lxylxf@tjcu.edu.cn; lxytwg@tjcu.edu.cn.}
\begin{document}
\maketitle

\centerline{\scshape Xinfu Li,\ Jianguang Bao, \ Wenguang
Tang$^{*}$}
\medskip
{\footnotesize
 \centerline{School of Science, Tianjin University of Commerce, Tianjin 300134,
P. R. China}}

\bigskip

\begin{abstract}
In this paper, we study the existence and non-existence of
normalized solutions to the lower critical Choquard equation with a
local perturbation
\begin{equation*}
\begin{cases}
-\Delta u+\lambda u=\gamma
(I_{\alpha}\ast|u|^{\frac{N+\alpha}{N}})|u|^{\frac{N+\alpha}{N}-2}u+\mu
|u|^{q-2}u,\quad \text{in}\  \mathbb{R}^N, \\
\int_{\mathbb{R}^N}|u|^2dx=c^2,
\end{cases}
\end{equation*}
where  $\gamma, \mu, c>0$, $2<q\leq 2+\frac{4}{N}$, and $\lambda\in
\mathbb{R}$ is an unknown parameter that appears as a Lagrange
multiplier. The results of this paper about this equation answer
some questions proposed by Yao, Chen, R\v{a}dulescu and Sun [Siam J.
Math. Anal., 54(3) (2022), 3696-3723]. Moreover, based on the
results obtained, we  study the multiplicity of normalized solutions
to the non-autonomous Choquard equation
\begin{equation*}
\begin{cases}
-\Delta u+\lambda u=(I_\alpha\ast [h(\epsilon
x)|u|^{\frac{N+\alpha}{N}}])h(\epsilon
x)|u|^{\frac{N+\alpha}{N}-2}u+\mu|u|^{q-2}u,\
x\in \mathbb{R}^N, \\
\int_{\mathbb{R}^N}|u|^2dx=c^2,
\end{cases}
\end{equation*}
where $\epsilon>0$, $2<q<2+\frac{4}{N}$, and $h$ is a positive and
continuous function. It is proved that the numbers of normalized
solutions are at least the numbers of global maximum points of $h$
when $\epsilon$ is small enough.\\
\textbf{2020 Mathematics Subject Classification}:  35J20;  35B33.
\end{abstract}

\section{Introduction and main results}

\setcounter{section}{1}
\setcounter{equation}{0}

In this paper, our  first aim is to study the existence and
non-existence of normalized solutions to the lower critical
autonomous Choquard equation with a local term
\begin{equation}\label{e1.3}
\begin{cases}
-\Delta u+\lambda u=\gamma
(I_{\alpha}\ast|u|^{\frac{N+\alpha}{N}})|u|^{\frac{N+\alpha}{N}-2}u+\mu
|u|^{q-2}u,\quad \text{in}\  \mathbb{R}^N, \\
\int_{\mathbb{R}^N}|u|^2dx=c^2,
\end{cases}
\end{equation}
where $N\geq 1$, $\gamma, \mu, c>0$, $2<q\leq 2+\frac{4}{N}$,
$\lambda\in \mathbb{R}$ is an unknown parameter that appears as a
Lagrange multiplier, $\alpha\in (0,N)$, and $I_{\alpha}$ is the
Riesz potential defined for every $x\in \mathbb{R}^N \setminus
\{0\}$ by
\begin{equation*}
I_{\alpha}(x):=\frac{A_\alpha(N)}{|x|^{N-\alpha}},\
A_\alpha(N):=\frac{\Gamma(\frac{N-\alpha}{2})}{\Gamma(\frac{\alpha}{2})\pi^{N/2}2^\alpha}
\end{equation*}
with $\Gamma$ denoting the Gamma function (see \cite{Riesz1949AM},
P.19).

One motivation driving the search for normalized solutions to
(\ref{e1.3}) is the nonlinear equation
\begin{equation}\label{e1.5}
i\partial_t\psi-\Delta
\psi-\gamma(I_\alpha\ast|\psi|^{p})|\psi|^{p-2}\psi-\mu|\psi|^{q-2}\psi=0,\
(t,x)\in \mathbb{R}\times \mathbb{R}^N,
\end{equation}
where $\frac{N+\alpha}{N}\leq p\leq \frac{N+\alpha}{(N-2)^+}$ and
$2<q\leq \frac{2N}{(N-2)^+}$. The equation (\ref{e1.5}) has several
physical origins. When $N = 3$, $p = 2$, $\alpha = 2$ and $\mu=0$,
(\ref{e1.5}) was investigated by Pekar in \cite{Pekar 1954} to study
the quantum theory of a polaron at rest. In \cite{Lieb 1977},
Choquard applied it as an approximation to Hartree-Fock theory of
one component plasma. It also arises in multiple particles systems
\cite{Gross 1996} and quantum mechanics \cite{Penrose 1996}. When $p
= 2$, equation (\ref{e1.5}) reduces to the well-known Hartree
equation. The Choquard equation (\ref{e1.5}) with or without a local
perturbation has attracted much attention nowadays, see
\cite{{Bonanno-Avenia 2014},{Cazenave 2003},{Chen-Guo
2007},{Feng-Yuan 2015},{Liu-Shi 2018},{Miao-Xu 2010}} for the local
existence, global existence, blow up and more in general dynamical
properties.

Searching for standing wave solution $\psi(t,x)=e^{-i\lambda t}u(x)$
of (\ref{e1.5})  leads to
\begin{equation}\label{e1.6}
-\Delta u+\lambda u=\gamma (I_{\alpha}\ast|u|^{p})|u|^{p-2}u+\mu
|u|^{q-2}u,\quad \text{in}\  \mathbb{R}^N.
\end{equation}
When looking for solutions to (\ref{e1.6}) one choice is to fix
$\lambda>0$ and  search for solutions to (\ref{e1.6}) as critical
points of the functional
\begin{equation*}
\bar{J}(u):=\int_{\mathbb{R}^N}\left(\frac{1}{2}|\nabla
u|^2+\frac{\lambda}{2}|u|^2-\frac{\gamma}{2p}(I_\alpha\ast
|u|^{p})|u|^{p}-\frac{\mu}{q}|u|^q\right)dx,
\end{equation*}
see for example \cite{{Li-Ma 2020},{Li-Ma-Zhang 2019},{Luo
2020},{Moroz-Schaftingen 2017}} and the references therein. Another
choice is to fix the $L^2$-norm of the unknown function $u$, that
is, to consider the problem
\begin{equation}\label{e1.55}
\begin{cases}
-\Delta u+\lambda
u=\gamma(I_\alpha\ast|u|^{p})|u|^{p-2}u+\mu|u|^{q-2}u,\ \text{in}\
\mathbb{R}^{N},\\
\int_{\mathbb{R}^N}|u|^2dx=c^2
\end{cases}
\end{equation}
with fixed $c>0$ and unknown $\lambda\in \mathbb{R}$. This is
meaningful from the physical point of view, as the $L^2$-norm is a
preserved quantity of the evolution equation (\ref{e1.5}) and the
variational characterization of such solutions is often a strong
help to analyze their orbital stability, see \cite{Cazenave-Lions82}
and the references therein. In this direction, define on
$H^1(\mathbb{R}^N)$ the energy functional
\begin{equation*}
\hat{J}(u):=\frac{1}{2}\int_{\mathbb{R}^N}|\nabla
u|^2dx-\frac{\gamma}{2p}\int_{\mathbb{R}^N}(I_\alpha\ast|u|^{p})|u|^{p}dx
-\frac{\mu}{q}\int_{\mathbb{R}^N}|u|^{q}dx.
\end{equation*}
A critical point of $\hat{J}$ constrained to
\begin{equation*}
S(c):=\left\{u\in
H^1(\mathbb{R}^N):\int_{\mathbb{R}^N}|u|^2dx=c^2\right\}
\end{equation*}
gives rise to a solution to (\ref{e1.55}). Such solution is usually
called a normalized solution, which is the aim of this paper.

The normalized solutions to the Choquard equation have been studied
extensively in these years, see
\cite{{Bartsch-Liu-Liu_2020},{Cingolani-Gallo-Tanaka-CV-22},{Cingolani-Tanaka21},{Jeanjean-Le-JDE},{Li-Ye_JMP_2014},
{Luo 2019}, {Ye 2016},{Yuan-Chen-Tang_2020}} and the references
therein. Recently, Li \cite{{Li21},{Li2022}} studied the existence,
multiplicity, orbital stability and instability of the normalized
solutions to the upper critical Choquard equation (\ref{e1.6}),
i.e., $p=\frac{N+\alpha}{N-2}$. As to the lower critical Choquard
equation, the  lower critical exponent $p=\frac{N+\alpha}{N}$  seems
to be a new feature for the Choquard equation, which is related to a
new phenomenon of ``bubbling at infinity" (see \cite{Moroz-Van
Schaftingen15}). So compared with the study developed for the upper
critical Choquard equation, the lower critical problem  seems to be
more challenging. In a recent paper \cite{Yao-Chen22}, the authors
explored new methods and ideas to study the  normalized solutions to
the lower critical Choquard equation, meanwhile they proposed some
open questions. In this paper,  we settle parts of the open
questions.

A solution $u$ to the problem (\ref{e1.3}) corresponds to a critical
point of the functional
\begin{equation}\label{e1.10}
\begin{split}
E_{q}(u):&=\frac{1}{2}\int_{\mathbb{R}^N}|\nabla u|^2dx-\frac{\gamma
N}{2(N+\alpha)}\int_{\mathbb{R}^N}(I_\alpha\ast|u|^{\frac{N+\alpha}{N}})|u|^{\frac{N+\alpha}{N}}dx\\
&\qquad-\frac{\mu}{q}\int_{\mathbb{R}^N}|u|^{q}dx
\end{split}
\end{equation}
restricted to the sphere $S(c)$. It is easy to see that $E_{q}\in
C^1(H^1(\mathbb{R}^N),\mathbb{R})$ and
\begin{equation*}
\begin{split}
E_{q}'(u)\varphi&=\int_{\mathbb{R}^N}\nabla u\nabla \varphi
dx-\gamma\int_{\mathbb{R}^N}(I_\alpha\ast|u|^{\frac{N+\alpha}{N}})|u|^{\frac{N+\alpha}{N}-2}u\varphi
dx\\
&\qquad -\mu\int_{\mathbb{R}^N}|u|^{q-2}u\varphi dx,\ \text{for\
any\ } \varphi\in H^1(\mathbb{R}^N).
\end{split}
\end{equation*}
For any $2<r<\frac{2N}{(N-2)^+}$, we define
$\eta_r:=\frac{N}{2}-\frac{N}{r}$,
\begin{equation}\label{e1.1}
\bar{S}^{-1}:=\inf_{u\in
H^1(\mathbb{R}^N)}\frac{\left(\int_{\mathbb{R}^N}|u|^2dx\right)^{\frac{r(1-\eta_r)}{2}}
\left(\int_{\mathbb{R}^N}|\nabla
u|^2dx\right)^{\frac{r\eta_r}{2}}}{\int_{\mathbb{R}^N}|u|^rdx},
\end{equation}
and
\begin{equation}\label{e1.2}
S_\alpha:=\inf_{u\in
H^1(\mathbb{R}^N)}\frac{\int_{\mathbb{R}^N}|u|^2dx}
{\left(\int_{\mathbb{R}^N}(I_\alpha\ast|u|^{\frac{N+\alpha}{N}})|u|^{\frac{N+\alpha}{N}}dx\right)^{\frac{N}{N+\alpha}}}.
\end{equation}

The main results to the equation (\ref{e1.3}) are as follows.

\begin{theorem}\label{thm1.1}
Let $N\geq 1$, $\alpha\in (0,N)$, $c,\gamma,\mu>0$, and
$2<q<2+\frac{4}{N}$. Then the infimum
\begin{equation*}
\sigma(c):=\inf_{S(c)}E_q(u)<-\frac{\gamma
N}{2(N+\alpha)}S_\alpha^{-\frac{N+\alpha}{N}}c^{\frac{2(N+\alpha)}{N}}
\end{equation*}
is achieved by $u_0 \in  S(c)$ with the following properties:

(i) $u_0$ is a real-valued positive function in $\mathbb{R}^N$,
which is radially symmetric and non-increasing;

(ii) $u_0$ is a ground state of (\ref{e1.3}) with some $\lambda_c
> \frac{\gamma N}{N+\alpha}S_\alpha^{-\frac{N+\alpha}{N}}c^{\frac{2\alpha}{N}}$.
A ground state $v$ of (\ref{e1.3}) is defined as follows:
\begin{equation*}
E_q|_{S(c)}'(v)=0\ \text{and}\ E_{q}(v)=\inf\{E_{q}(w):\ w\in S(c),\
E_q|_{S(c)}'(w)=0\}.
\end{equation*}
\end{theorem}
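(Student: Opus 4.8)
The plan is to prove the four assertions in sequence: that $E_q$ is bounded below on $S(c)$, that $\sigma(c)$ lies strictly below the stated threshold, that the infimum is attained, and finally the qualitative properties (i)--(ii). First I would check boundedness from below. On $S(c)$ the definition of $S_\alpha$ gives $\int_{\mathbb{R}^N}(I_\alpha*|u|^{\frac{N+\alpha}{N}})|u|^{\frac{N+\alpha}{N}}dx\le S_\alpha^{-\frac{N+\alpha}{N}}c^{\frac{2(N+\alpha)}{N}}$, so the nonlocal term contributes only a fixed constant; for the local term I would apply the Gagliardo--Nirenberg inequality (the constant $\bar S$ with $r=q$), noting that $2<q<2+\frac 4N$ forces $q\eta_q<2$, whence $E_q(u)\ge \tfrac12\|\nabla u\|_2^2-C\|\nabla u\|_2^{q\eta_q}-C'$ is bounded below. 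For the strict inequality I would test with the $L^2$-preserving dilations $w_t(x):=t^{N/2}w(tx)$ of a function $w\in S(c)$ attaining $S_\alpha$ (the Hardy--Littlewood--Sobolev extremal $(1+|x|^2)^{-N/2}$, rescaled, lies in $H^1(\mathbb{R}^N)$). The nonlocal term is scale invariant and equal to its maximal value, while $\|\nabla w_t\|_2^2=t^2\|\nabla w\|_2^2$ and $\|w_t\|_q^q=t^{\frac{N(q-2)}{2}}\|w\|_q^q$, so
\begin{equation*}
\begin{split}
E_q(w_t)={}&-\frac{\gamma N}{2(N+\alpha)}S_\alpha^{-\frac{N+\alpha}{N}}c^{\frac{2(N+\alpha)}{N}}\\
&+\frac{t^2}{2}\|\nabla w\|_2^2-\frac{\mu}{q}t^{\frac{N(q-2)}{2}}\|w\|_q^q.
\end{split}
\end{equation*}
Since $\frac{N(q-2)}{2}<2$, the last term dominates the gradient term as $t\to 0^+$, so $E_q(w_t)$ falls strictly below the threshold for small $t$. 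This is exactly where the subcritical perturbation ($\mu>0$, $q<2+\frac4N$) is needed: without it the infimum would only equal the threshold, the ``bubbling at infinity'' described in the introduction.

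To attain $\sigma(c)$ I would take a minimizing sequence and pass to its Schwarz symmetrization; by the P\'olya--Szeg\H{o} and Riesz rearrangement inequalities this does not increase $E_q$ (the nonlocal term increases, the Dirichlet term decreases, the $L^2$ and $L^q$ norms are preserved), so one may assume $\{u_n\}$ is radially symmetric, non-increasing, and (by the first step) bounded in $H^1(\mathbb{R}^N)$. Let $u_0$ be its weak limit; the compact embedding $H^1_{\mathrm{rad}}(\mathbb{R}^N)\hookrightarrow L^q(\mathbb{R}^N)$ gives $u_n\to u_0$ in $L^q$ and a.e. The strict inequality just proved forces $u_0\neq 0$: were $u_0=0$, then $\|u_n\|_q\to 0$ and $E_q(u_n)\ge -\frac{\gamma N}{2(N+\alpha)}S_\alpha^{-\frac{N+\alpha}{N}}c^{\frac{2(N+\alpha)}{N}}+o(1)$, contradicting $\sigma(c)<$ threshold.

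The heart of the argument is excluding loss of $L^2$-mass. Writing $\|u_0\|_2^2=m^2$ and $d^2=c^2-m^2$, a Brezis--Lieb splitting of each term (including a nonlocal Brezis--Lieb lemma for the Choquard term) together with $u_0\in S(m)$ and the bound $\int(I_\alpha*|u_n-u_0|^{\frac{N+\alpha}{N}})|u_n-u_0|^{\frac{N+\alpha}{N}}\le S_\alpha^{-\frac{N+\alpha}{N}}\|u_n-u_0\|_2^{\frac{2(N+\alpha)}{N}}$ yields
\begin{equation*}
\sigma(c)\ge \sigma(m)-\frac{\gamma N}{2(N+\alpha)}S_\alpha^{-\frac{N+\alpha}{N}}(c^2-m^2)^{\frac{N+\alpha}{N}}.
\end{equation*}
I would then establish the \emph{strict} subadditivity
\begin{equation*}
\sigma(c)<\sigma(m)-\frac{\gamma N}{2(N+\alpha)}S_\alpha^{-\frac{N+\alpha}{N}}(c^2-m^2)^{\frac{N+\alpha}{N}},\quad 0<m<c,
\end{equation*}
by gluing a near-optimizer of $\sigma(m)$ to a far-away spread-out bubble carrying mass $d$ with energy close to the threshold: because $I_\alpha>0$ everywhere, the nonlocal cross-interaction between the well-separated pieces is strictly positive and lowers the energy by a fixed amount, producing the strict gap. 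Comparing the two displays forces $d=0$, i.e. $m=c$.

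Once $\|u_0\|_2=c$, one has $u_n\to u_0$ in $L^2$, hence $|u_n|^{\frac{N+\alpha}{N}}\to|u_0|^{\frac{N+\alpha}{N}}$ in $L^{\frac{2N}{N+\alpha}}$ and the nonlocal term converges; with $L^q$-convergence and weak lower semicontinuity of $\|\nabla\cdot\|_2$ this gives $E_q(u_0)\le\liminf E_q(u_n)=\sigma(c)\le E_q(u_0)$, so $u_0$ attains $\sigma(c)$ (and convergence is strong in $H^1$). Property (i) is inherited from the symmetrization and, taking $u_0=|u_0|$, from the strong maximum principle applied to (\ref{e1.3}). For (ii), the Lagrange multiplier rule yields $\lambda_c$ with $E_q'(u_0)+\lambda_c u_0=0$, so $u_0$ solves (\ref{e1.3}) and minimality makes it a ground state; testing the equation with $u_0$ and combining with $2E_q(u_0)=2\sigma(c)<-\frac{\gamma N}{N+\alpha}S_\alpha^{-\frac{N+\alpha}{N}}c^{\frac{2(N+\alpha)}{N}}$ gives
\begin{equation*}
\begin{split}
\lambda_c c^2&=\frac{\gamma\alpha}{N+\alpha}\int_{\mathbb{R}^N}(I_\alpha*|u_0|^{\frac{N+\alpha}{N}})|u_0|^{\frac{N+\alpha}{N}}dx+\frac{\mu(q-2)}{q}\int_{\mathbb{R}^N}|u_0|^q dx-2E_q(u_0)\\
&>\frac{\gamma N}{N+\alpha}S_\alpha^{-\frac{N+\alpha}{N}}c^{\frac{2(N+\alpha)}{N}},
\end{split}
\end{equation*}
since the first two terms are nonnegative; dividing by $c^2$ gives the stated bound. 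The principal obstacle throughout is the scale invariance of the nonlocal term, which makes the minimizing sequence non-compact in $L^2$ (mass may escape by spreading); the strict inequality rules out total escape, while the strict subadditivity—resting on the strictly positive long-range Riesz interaction and on a nonlocal Brezis--Lieb decomposition—rules out partial escape, and this quantitative gluing estimate is the technical core.
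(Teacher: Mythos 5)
Your overall architecture is sound and genuinely different from the paper's: you symmetrize the minimizing sequence at the outset and use radial compactness plus a Brezis--Lieb splitting, whereas the paper (Lemma \ref{lem2.2}) runs Lions' concentration-compactness up to translations and rules out dichotomy with the scaling inequality $\frac{c_1^2}{c_2^2}\sigma(c_2)<\sigma(c_1)$ of Lemma \ref{lem2.1}(3), symmetrizing only at the very end. However, your proof of the strict subadditivity
\begin{equation*}
\sigma(c)<\sigma(m)-\frac{\gamma N}{2(N+\alpha)}S_\alpha^{-\frac{N+\alpha}{N}}(c^2-m^2)^{\frac{N+\alpha}{N}},\qquad 0<m<c,
\end{equation*}
which you yourself identify as the technical core, rests on a mechanism that does not work. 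The Riesz cross-interaction between a near-optimizer of $\sigma(m)$ and a bubble of mass $d=\sqrt{c^2-m^2}$ placed at distance $R$ is indeed strictly positive, but it is of order $R^{-(N-\alpha)}$ and tends to $0$ as $R\to\infty$; moreover its size depends on the particular functions being glued. So the gluing only yields $\sigma(c)<E_q(u_\varepsilon)+E_q(w_\varepsilon)\leq \sigma(m)+\sigma(d)+2\varepsilon$ for each $\varepsilon>0$, hence after letting $\varepsilon\to 0$ merely the \emph{non-strict} bound $\sigma(c)\leq\sigma(m)+\sigma(d)$. There is no ``fixed amount'' by which the long-range interaction lowers the energy; strict subadditivity can never be extracted from the vanishing interaction of well-separated pieces, and as written this step is a genuine gap.

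The inequality you need is nonetheless true, and the repair lies inside your own proposal: your step establishing $\sigma(c)<-\frac{\gamma N}{2(N+\alpha)}S_\alpha^{-\frac{N+\alpha}{N}}c^{\frac{2(N+\alpha)}{N}}$ (testing with $L^2$-preserving dilations of the $S_\alpha$-extremal) is valid at \emph{every} mass, so in particular
\begin{equation*}
\sigma(d)<-\frac{\gamma N}{2(N+\alpha)}S_\alpha^{-\frac{N+\alpha}{N}}d^{\frac{2(N+\alpha)}{N}},\qquad d=\sqrt{c^2-m^2},
\end{equation*}
and combining this strict inequality with the non-strict gluing bound $\sigma(c)\leq\sigma(m)+\sigma(d)$ gives exactly the strict subadditivity you want; alternatively you could import the paper's scaling argument, whose strictness comes from $\sigma(c_1)<0$ rather than from any interaction term. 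One further point to repair: the compact embedding $H^1_{\mathrm{rad}}(\mathbb{R}^N)\hookrightarrow L^q(\mathbb{R}^N)$ you invoke fails for $N=1$, while the theorem allows $N\geq 1$; since your sequence consists of symmetric non-increasing functions you should instead use the pointwise bound $|u_n(x)|\leq C|x|^{-N/2}\|u_n\|_2$ together with local Rellich compactness, which gives the $L^q$-convergence in all dimensions.
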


\begin{theorem}\label{thm1.2}
Let $N\geq 1$, $\alpha\in (0,N)$, $c,\gamma,\mu>0$, and
$q=2+\frac{4}{N}$. If $\, 0<\mu c^{4/N}\leq \frac{N+2}{N\bar{S}}$,
then (\ref{e1.3}) has no solutions.
\end{theorem}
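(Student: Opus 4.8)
The plan is to argue by contradiction: assume $u\in S(c)$ solves (\ref{e1.3}) with some Lagrange multiplier $\lambda\in\mathbb{R}$, and show that the existence of such a $u$ forces $\mu c^{4/N}\ge\frac{N+2}{N\bar{S}}$, with the extremal value $\frac{N+2}{N\bar{S}}$ itself excluded. Since $q=2+\frac4N$, a direct computation gives $\eta_q=\frac{N}{N+2}$, whence $q\eta_q=2$ and $q(1-\eta_q)=\frac4N$, so that definition (\ref{e1.1}) specializes to the $L^2$-critical Gagliardo--Nirenberg inequality
\begin{equation*}
\int_{\mathbb{R}^N}|u|^q\,dx\le \bar{S}\,\Big(\int_{\mathbb{R}^N}|u|^2\,dx\Big)^{2/N}\int_{\mathbb{R}^N}|\nabla u|^2\,dx .
\end{equation*}
First I would record two integral identities for a solution. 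Testing (\ref{e1.3}) with $u$ gives the Nehari identity
\begin{equation*}
\int_{\mathbb{R}^N}|\nabla u|^2\,dx+\lambda c^2=\gamma\int_{\mathbb{R}^N}(I_\alpha\ast|u|^{\frac{N+\alpha}{N}})|u|^{\frac{N+\alpha}{N}}\,dx+\mu\int_{\mathbb{R}^N}|u|^q\,dx,
\end{equation*}
while the Pohozaev identity (obtained from the $L^2$-preserving dilation $u_s(x)=u(x/s)$, under which the lower critical Choquard integral is invariant in exponent like the mass) reads
\begin{equation*}
\frac{N-2}{2}\int_{\mathbb{R}^N}|\nabla u|^2\,dx+\frac{\lambda N}{2}c^2=\frac{\gamma N}{2}\int_{\mathbb{R}^N}(I_\alpha\ast|u|^{\frac{N+\alpha}{N}})|u|^{\frac{N+\alpha}{N}}\,dx+\frac{\mu N}{q}\int_{\mathbb{R}^N}|u|^q\,dx.
\end{equation*}

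Next I would eliminate $\lambda$. Multiplying the Nehari identity by $N/2$ and subtracting the Pohozaev identity makes both the $\lambda$-term and the Choquard integral cancel, leaving, after inserting $q=2+\frac4N$,
\begin{equation*}
\int_{\mathbb{R}^N}|\nabla u|^2\,dx=\frac{N\mu}{N+2}\int_{\mathbb{R}^N}|u|^q\,dx .
\end{equation*}
Feeding the Gagliardo--Nirenberg inequality with $\int_{\mathbb{R}^N}|u|^2\,dx=c^2$ into the right-hand side yields
\begin{equation*}
\int_{\mathbb{R}^N}|\nabla u|^2\,dx\le \frac{N\mu}{N+2}\,\bar{S}\,c^{4/N}\int_{\mathbb{R}^N}|\nabla u|^2\,dx .
\end{equation*}
Since $u\in S(c)$ is nontrivial we have $\int_{\mathbb{R}^N}|\nabla u|^2\,dx>0$, so dividing forces $\mu c^{4/N}\ge\frac{N+2}{N\bar{S}}$. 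Hence if $0<\mu c^{4/N}<\frac{N+2}{N\bar{S}}$ there is no solution, and under the hypothesis $0<\mu c^{4/N}\le\frac{N+2}{N\bar{S}}$ the only remaining possibility is the equality $\mu c^{4/N}=\frac{N+2}{N\bar{S}}$, in which the Gagliardo--Nirenberg inequality must be saturated by $u$.

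The main obstacle is exactly this borderline case: the Pohozaev--Nehari computation is blind to the nonlocal term (it cancelled above), and for the purely local $L^2$-critical equation the extremal mass \emph{is} attained, so the Choquard term must now be exploited. Equality in the $L^2$-critical Gagliardo--Nirenberg inequality forces $u$, up to a constant phase, a translation, and an $L^2$-preserving dilation, to coincide with the ground state $Q$ of $-\Delta Q+Q=Q^{q-1}$; thus $u=\sigma Q(\tau(\cdot-x_0))$ for some $\sigma,\tau>0$ and $x_0\in\mathbb{R}^N$, and in particular $u>0$ decays exponentially. Substituting this form into (\ref{e1.3}), using that $u$ then satisfies $-\Delta u+\tau^2u=\tau^2\sigma^{-(q-2)}u^{q-1}$, and dividing the resulting relation by $u^{\alpha/N}>0$, I would arrive at
\begin{equation*}
\big[\tau^2\sigma^{-(q-2)}-\mu\big]\,u^{\,q-1-\frac{\alpha}{N}}+\big[\lambda-\tau^2\big]\,u^{\,1-\frac{\alpha}{N}}=\gamma\,\big(I_\alpha\ast u^{\frac{N+\alpha}{N}}\big).
\end{equation*}
Both exponents $q-1-\frac{\alpha}{N}=\frac{N+4-\alpha}{N}$ and $1-\frac{\alpha}{N}=\frac{N-\alpha}{N}$ are positive, so the left-hand side decays exponentially as $|x|\to\infty$; on the other hand, since $u^{\frac{N+\alpha}{N}}\in L^1(\mathbb{R}^N)$ is nonnegative and nontrivial, the right-hand side is strictly positive and behaves like $\gamma A_\alpha(N)\,\|u^{\frac{N+\alpha}{N}}\|_{1}\,|x|^{-(N-\alpha)}$ at infinity, i.e. it decays only polynomially. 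This incompatibility of decay rates, which crucially uses $\gamma>0$, is the contradiction that eliminates the equality case and completes the proof.
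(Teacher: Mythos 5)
Your proposal is correct and follows essentially the same route as the paper: combine the Nehari and Pohozaev identities to eliminate both $\lambda$ and the Choquard integral (the paper simply quotes the resulting identity $Q_q(u)=\|\nabla u\|_2^2-\mu\eta_q\|u\|_q^q=0$ from Yao--Chen), feed the $L^2$-critical Gagliardo--Nirenberg inequality into it to exclude $\mu c^{4/N}<\frac{N+2}{N\bar S}$, and reduce the borderline case $\mu c^{4/N}=\frac{N+2}{N\bar S}$ to showing that a Gagliardo--Nirenberg extremal, i.e.\ a rescaled translate of the ground state of $-\Delta w+w=|w|^{4/N}w$ (Lemma \ref{lem3.1}), cannot solve (\ref{e1.3}). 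The one place you genuinely add something is this last step: the paper dismisses it with ``Obviously, such $u$ is not a solution to (\ref{e1.3})'', whereas you actually prove it by isolating the Riesz potential,
\begin{equation*}
\bigl[\tau^2\sigma^{-(q-2)}-\mu\bigr]u^{\,q-1-\frac{\alpha}{N}}+\bigl[\lambda-\tau^2\bigr]u^{\,1-\frac{\alpha}{N}}=\gamma\,\bigl(I_\alpha\ast u^{\frac{N+\alpha}{N}}\bigr),
\end{equation*}
and noting that the left-hand side inherits the exponential decay of the ground state while the right-hand side, being the Riesz potential of a nonnegative, nontrivial $L^1$ function, is strictly positive and decays only like $|x|^{-(N-\alpha)}$; this is exactly where $\gamma>0$ enters, and the asymptotics $(I_\alpha\ast f)(x)\sim A_\alpha(N)\|f\|_1|x|^{\alpha-N}$ for exponentially decaying $f\geq 0$ is standard. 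So your write-up is in fact more complete than the paper's at this point. One cosmetic slip: the dilation $u_s(x)=u(x/s)$ you invoke to motivate the Pohozaev identity is not $L^2$-preserving (indeed $\|u_s\|_2^2=s^N\|u\|_2^2$), but the identity you then state is the correct one, so nothing in the argument is affected.
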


\begin{remark}\label{rmk1.1}
(1) Theorem \ref{thm1.1} was obtained in \cite{Yao-Chen22} under the
assumption that $\mu$ is larger than some positive constant. In this
paper, by using the extremal functions of (\ref{e1.2}), we improve
their result.

(2) Theorem \ref{thm1.2} was obtained in \cite{Yao-Chen22} under the
assumption that $0<\mu c^{4/N}< \frac{N+2}{N\bar{S}}$. In this
paper, by using  the extremal functions of (\ref{e1.1}) and
(\ref{e1.2}), we can improve their result. Moreover, we give a
different proof for Theorem \ref{thm1.2} and further give some
insights for other cases (see Remark \ref{rmk1.3}).
\end{remark}

Our second aim is to study the multiplicity of normalized solutions
to the non-autonomous Choquard equation
\begin{equation}\label{e1.7}
\begin{cases}
-\Delta u+\lambda u=(I_\alpha\ast [h(\epsilon
x)|u|^{\frac{N+\alpha}{N}}])h(\epsilon
x)|u|^{\frac{N+\alpha}{N}-2}u+\mu|u|^{q-2}u,\
x\in \mathbb{R}^N, \\
\int_{\mathbb{R}^N}|u|^2dx=c^2,
\end{cases}
\end{equation}
where $\epsilon>0$, $2<q<2+\frac{4}{N}$, $N, c, \mu, \lambda,
\alpha, I_\alpha$ are as in (\ref{e1.3}), and  $h$ satisfies
the following conditions:\\
($h_1$) $h\in C(\mathbb{R}^N,\mathbb{R})$ and $0<h_{inf}:=\inf_{x\in
\mathbb{R}^N}h(x)\leq \max_{x\in \mathbb{R}^N}h(x)=:h_{{max}}$;\\
($h_2$) $h_{\infty}:=\lim_{|x|\to +\infty}h(x)<h_{{max}}$;\\
($h_3$) $h^{-1}(h_{{max}})=\{a_1,a_2,\cdots,a_l\}$ with $a_1=0$ and
$a_j\neq a_i$ if $i\neq j$.

There are many researches concerning the non-normalized solutions to
the non-autonomous lower critical Choquard equation in these years,
see \cite{{Li-Li-Wu 21},{Moroz-Van Schaftingen15},{Zhou-Liu-Zhang
22}} and the references therein.  Recently, there is a lot of work
going on with normalized solutions to the non-autonomous Choquard
equation, see \cite{{Dinh 20},{Guo-Luo-Wang},{Li-Zhao-Wang
19},{Lions 1984},{Shi 21},{Xiao-Geng-Wang 20}}. However, as to our
knowledge, there are no paper considering the multiplicity of
normalized solutions to the non-autonomous lower critical Choquard
equation. In this paper, based the results obtained to (\ref{e1.3}),
and using the method of \cite{{Alves-ZAMP22},{Cao-Noussair 96}}, we
study this problem and obtain the following result to (\ref{e1.7}).

\begin{theorem}\label{thm1.3}
Let $N\geq 1$, $\alpha\in (0,N)$, $c,\epsilon,\mu>0$,
$2<q<2+\frac{4}{N}$, and $h$ satisfy ($h_1$)-($h_3$). Then there
exists $\epsilon_0>0$ such that (\ref{e1.7}) admits at least $l$
couples $(u_j,\lambda_j)\in H^1(\mathbb{R}^N)\times \mathbb{R}$ of
weak solutions for $\epsilon\in (0,\epsilon_0)$ with
$\int_{\mathbb{R}^N}|u_j|^2dx=c^2$, $\lambda_j>0$ and
$\mathcal{E}_{\epsilon}(u_j)<0$ for $j=1,2,\cdots,l$, where
\begin{equation*}
\begin{split}
\mathcal{E}_{\epsilon}(u):&=\frac{1}{2}\int_{\mathbb{R}^N}|\nabla
u|^2dx-\frac{\mu}{q}\int_{\mathbb{R}^N}|u|^{q}dx\\
&\qquad-\frac{ N}{2(N+\alpha)}\int_{\mathbb{R}^N}(I_\alpha\ast
[h(\epsilon x)|u|^{\frac{N+\alpha}{N}}])h(\epsilon
x)|u|^{\frac{N+\alpha}{N}}dx.
\end{split}
\end{equation*}
\end{theorem}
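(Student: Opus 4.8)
The plan is to run the Lusternik--Schnirelmann / barycenter scheme of Cao--Noussair and Alves, comparing $\mathcal{E}_{\epsilon}$ on $S(c)$ with the two autonomous ``limit'' functionals obtained by freezing the coefficient $h(\epsilon x)$ at $h_{max}$ and at $h_{\infty}$. For a constant $b>0$ let $\mathcal{E}_b$ be the functional $E_q$ of (\ref{e1.10}) with $\gamma$ replaced by $b^2$, and set $\sigma_b(c):=\inf_{S(c)}\mathcal{E}_b$. Applying Theorem \ref{thm1.1} with $\gamma=b^2$, each $\sigma_b(c)$ is attained by a positive radial ground state and is strictly negative; since the Choquard term is positive, $b\mapsto\sigma_b(c)$ is strictly decreasing, so assumption $(h_2)$ yields the crucial strict gap $\sigma_{h_{max}}(c)<\sigma_{h_{\infty}}(c)<0$. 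This gap is the quantitative engine of the whole argument.

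Next I would set up the two maps linking the sublevel sets of $\mathcal{E}_{\epsilon}$ to the set $M=\{a_1,\dots,a_l\}$ of global maxima. Since $h\le h_{max}$ pointwise we get $\mathcal{E}_{\epsilon}(u)\ge\mathcal{E}_{h_{max}}(u)$ and hence $\inf_{S(c)}\mathcal{E}_{\epsilon}\ge\sigma_{h_{max}}(c)$. For an upper bound I would concentrate a ground state $w$ of the $h_{max}$-problem near each $a_j$, defining $\Phi_{\epsilon}(a_j):=w(\cdot-a_j/\epsilon)$ (renormalized in $L^2$); using the continuity of $h$ at $a_j$ together with the decay of $w$, one expects $\mathcal{E}_{\epsilon}(\Phi_{\epsilon}(a_j))\to\sigma_{h_{max}}(c)$, so $\Phi_{\epsilon}$ maps $M$ into the sublevel set $\Sigma_{\epsilon}^{d}:=\{u\in S(c):\mathcal{E}_{\epsilon}(u)\le d\}$ for any fixed $d\in(\sigma_{h_{max}}(c),\sigma_{h_{\infty}}(c))$ once $\epsilon$ is small. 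In the other direction I would build a barycenter map $\beta$ from the mass density of $u$ and show, using the gap $\sigma_{h_{max}}(c)<\sigma_{h_{\infty}}(c)$ to preclude concentration at spatial infinity (where $h\to h_{\infty}$), that for every $u\in\Sigma_{\epsilon}^{d}$ the mass concentrates near $M$, so $\beta$ sends $\Sigma_{\epsilon}^{d}$ into a $\delta$-neighborhood $M_{\delta}$ of $M$. Composing gives $\beta\circ\Phi_{\epsilon}\simeq\mathrm{id}_M$, forcing $\mathrm{cat}(\Sigma_{\epsilon}^{d})\ge\mathrm{cat}_{M_{\delta}}(M)=l$.

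To convert this category estimate into $l$ genuine constrained critical points I would verify that $\mathcal{E}_{\epsilon}|_{S(c)}$ satisfies the Palais--Smale condition at every level $d<\sigma_{h_{\infty}}(c)$: a profile/splitting analysis of a $(PS)_d$ sequence excludes vanishing (since $d<0$) and excludes dichotomy with mass escaping to infinity, because an escaping bubble would only feel the coefficient $h_{\infty}$ and therefore cost at least $\sigma_{h_{\infty}}(c)>d$. The abstract Lusternik--Schnirelmann theorem for $C^1$ functionals on the manifold $S(c)$ then produces at least $\mathrm{cat}(\Sigma_{\epsilon}^{d})\ge l$ critical points $u_j$, all with $\mathcal{E}_{\epsilon}(u_j)\le d<0$. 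Finally, testing the Euler--Lagrange equation with $u_j$ and combining it with the associated Pohozaev identity yields the sign $\lambda_j>0$, exactly as in the argument behind Theorem \ref{thm1.1}(ii).

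The hard part will be the compactness step. The lower-critical Choquard nonlinearity is invariant under the $L^2$-preserving dilation $u\mapsto t^{N/2}u(t\cdot)$ --- this is the ``bubbling at infinity'' phenomenon of Moroz--Van Schaftingen --- so the constrained functional does \emph{not} satisfy $(PS)$ globally, and minimizing or Palais--Smale sequences can split with a piece drifting to infinity or spreading. The entire recovery of compactness below $\sigma_{h_{\infty}}(c)$, and with it the whole multiplicity count, rests on the strict energy gap $\sigma_{h_{max}}(c)<\sigma_{h_{\infty}}(c)$ proved in the first step; establishing this gap carefully and then exploiting it to control every admissible limiting profile is where the real work lies.
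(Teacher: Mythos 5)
Your overall scheme has the same skeleton as the paper's Section 4 (which also follows Cao--Noussair and Alves): the strict gap $\Upsilon_{h_{max},c}<\Upsilon_{h_\infty,c}<0$ between the two frozen-coefficient problems (Lemma \ref{cor3.1}), concentrated test functions $u(\cdot-a_i/\epsilon)$, a truncated barycenter map, and compactness below an energy threshold. The paper converts this into multiplicity not by Lusternik--Schnirelmann category but by Ekeland minimization on the $l$ disjoint sets $\theta^i_\epsilon=\{u\in S(c):|Q_\epsilon(u)-a_i|\le\tilde\rho\}$, using $\beta^i_\epsilon<\tilde\beta^i_\epsilon$ to keep the $(PS)$ sequences inside; that difference is inessential. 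What is essential is your compactness claim, and there your argument has a genuine gap.

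You assert that $\mathcal{E}_\epsilon|_{S(c)}$ satisfies $(PS)_d$ at \emph{every} level $d<\Upsilon_{h_\infty,c}$ (your $\sigma_{h_\infty}(c)$), on the grounds that an escaping bubble feels only $h_\infty$ and therefore costs at least $\Upsilon_{h_\infty,c}>d$. This accounting is wrong when the escaping piece carries only part of the mass, which is exactly the dichotomy case. If $u_n\rightharpoonup u_\epsilon\not\equiv 0$ with $\|u_\epsilon\|_2=b$ and the escaping part $v_n=u_n-u_\epsilon$ carries mass $m$, $b^2+m^2=c^2$, then the best lower bound available from splitting and the strict sub-additivity of Lemma \ref{lem2.1}(3) is
\begin{equation*}
\begin{split}
d &\ \geq\ \Upsilon_{h_\infty,m}+\Upsilon_{h_{max},b}\ \geq\ \frac{m^2}{c^2}\,\Upsilon_{h_\infty,c}+\frac{b^2}{c^2}\,\Upsilon_{h_{max},c}\\
&\ =\ \Upsilon_{h_{max},c}+\frac{m^2}{c^2}\bigl(\Upsilon_{h_\infty,c}-\Upsilon_{h_{max},c}\bigr),
\end{split}
\end{equation*}
which tends to $\Upsilon_{h_{max},c}$, not to $\Upsilon_{h_\infty,c}$, as $m\to 0$. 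Hence for levels in the upper part of the interval $(\Upsilon_{h_{max},c},\Upsilon_{h_\infty,c})$ a splitting with a small escaping mass cannot be ruled out by energy comparison, and $(PS)_d$ is not established there. This is precisely what the paper's Lemma \ref{lem4.5} is for: using the approximate Euler--Lagrange equation and the uniform positivity of the Lagrange multipliers, $\lambda_\epsilon\geq -2\Upsilon_{h_\infty,c}/c^2>0$, it proves a mass-quantization bound $\limsup_n\|v_n\|_2\geq\beta>0$ with $\beta$ independent of $\epsilon$; only then does Lemma \ref{lem4.3} give $(PS)_a$, and only for $a<\Upsilon_{h_{max},c}+\rho_0$ with $\rho_0\leq\frac{\beta^2}{c^2}(\Upsilon_{h_\infty,c}-\Upsilon_{h_{max},c})$. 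Your plan is repairable: prove this quantization lemma and then run your category argument with the sublevel $d$ chosen below $\Upsilon_{h_{max},c}+\rho_0$; your maps $\Phi_\epsilon$ and the barycenter still behave as required at such levels (this is the content of Lemmas \ref{lem4.6} and \ref{lem4.7}). Two smaller inaccuracies in the same step: excluding vanishing ``since $d<0$'' does not work at the lower critical exponent, because the Choquard term is controlled by the $L^2$-norm alone and survives Lions vanishing (this is the very dilation invariance you point out); the correct exclusion is that any sequence with zero weak limit only feels $h_\infty$ and so has energy $\geq\Upsilon_{h_\infty,c}+o_n(1)$, as in Lemma \ref{lem4.1}. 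And for $\lambda_j>0$ you should not invoke a Pohozaev identity --- $h$ is merely continuous, so no such identity is available for (\ref{e1.7}); testing the equation with $u_j$ and using $\mathcal{E}_\epsilon(u_j)<0$ already gives $\lambda_j c^2\geq -2\mathcal{E}_\epsilon(u_j)>0$, which is how the paper concludes.
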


\begin{remark}\label{rmk1.2}
 The same result as Theorem \ref{thm1.3}  can be proved for the problem
\begin{equation*}
\begin{cases}
-\Delta u+\lambda u=\gamma(I_\alpha\ast
|u|^{\frac{N+\alpha}{N}})|u|^{\frac{N+\alpha}{N}-2}u+ h(\epsilon
x)|u|^{q-2}u,\
x\in \mathbb{R}^N, \\
\int_{\mathbb{R}^N}|u|^2dx=c^2.
\end{cases}
\end{equation*}
\end{remark}

\medskip

This paper is organized as follows. In Section 2, we cite some
preliminary lemmas used in the study. In Section 3, we study the
 autonomous Choquard equation (\ref{e1.3}) and  in Section 4, we
study the  non-autonomous Choquard equation (\ref{e1.7}).

\medskip

\textbf{Notation}:  For $t\geq 1$, the  $L^t$-norm of $u\in
L^t(\mathbb{R}^N)$ is denoted by $\|u\|_t$. The usual norm of $u\in
H^1(\mathbb{R}^N)$ is denoted by $\|u\|$. \ $o_n(1)$ denotes a real
sequence with $o_n(1) \to 0$ as $n\to +\infty$. `$\rightarrow$'
denotes  strong convergence and `$\rightharpoonup$' denotes weak
convergence. $B_{r}(x_0):=\{x\in\mathbb{R}^N:|x-x_0|<r\}$.
$C,C_1,C_2,\cdots$ denote any positive constant, whose value is not
relevant and may be change from line to line.

\section{Preliminaries }
\setcounter{section}{2} \setcounter{equation}{0}

The following Gagliardo-Nirenberg inequality can be found in
(\cite{Weinstein 1983}, Theorem B).

\begin{lemma}\label{lem3.1}
Let $N\geq 1$, $2<r<\frac{2N}{(N-2)^+}$, and $\bar{S}$ be defined in
(\ref{e1.1}). Then $\bar{S}>0$ and  the minimizer of $\bar{S}$ is
$$u(x)=\left(\frac{\nu_0}{\mu_0}\right)^{\frac{1}{r-2}}w(\sqrt{\nu_0}(x-y)),\  y\in \mathbb{R}^N,$$
where $\nu_0=\frac{4}{N(r-2)}(1-\frac{(r-2)(N-2)}{4})$,
$\mu_0=\frac{4}{N(r-2)}$, and $w$ is the unique positive radial
solution of the equation
\begin{equation*}
-\Delta u+u=|u|^{r-2}u,\ \text{in}\ \mathbb{R}^N.
\end{equation*}
\end{lemma}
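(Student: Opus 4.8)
The plan is to realize $\bar S^{-1}$ as the infimum of the Weinstein quotient
$$J(u):=\frac{\left(\int_{\mathbb{R}^N}|u|^2dx\right)^{\frac{r(1-\eta_r)}{2}}\left(\int_{\mathbb{R}^N}|\nabla u|^2dx\right)^{\frac{r\eta_r}{2}}}{\int_{\mathbb{R}^N}|u|^rdx}$$
and to exploit the two scaling symmetries that the exponents are tailored to respect. First I would check that $J$ is invariant under the scalar dilation $u\mapsto s\,u$ for $s>0$ (each factor is $r$-homogeneous) and under the spatial rescaling $u\mapsto u(\lambda\,\cdot)$ for $\lambda>0$; the identity $r\eta_r=\frac{N(r-2)}{2}$ makes the $\lambda$-powers cancel, so that $J(u(\lambda\,\cdot))=J(u)$. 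Positivity $\bar S^{-1}=\inf J>0$ follows at once from the non-sharp Gagliardo--Nirenberg inequality, which bounds $\int|u|^rdx$ above by a constant times the numerator of $J$. The two invariances then let me normalize any minimizing sequence so that $\int|u_n|^2dx=\int|\nabla u_n|^2dx=1$, after which the sequence is bounded in $H^1(\mathbb{R}^N)$.

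Next I would produce a minimizer. Since $J$ only involves $\int|u|^2$, $\int|\nabla u|^2$ and $\int|u|^r$, replacing $u$ by its symmetric decreasing rearrangement $u^{*}$ preserves the $L^2$- and $L^r$-norms and does not increase $\int|\nabla u|^2$ by the P\'olya--Szeg\H{o} inequality, so $J(u^{*})\le J(u)$ and I may restrict to radial, non-increasing functions. Along a normalized radial minimizing sequence $\{u_n\}$, boundedness in $H^1$ together with the compact embedding $H^1_{\mathrm{rad}}(\mathbb{R}^N)\hookrightarrow L^r(\mathbb{R}^N)$ (Strauss, valid since $2<r<\frac{2N}{(N-2)^+}$) yields a subsequence with $u_n\to u_0$ strongly in $L^r$ and weakly in $H^1$. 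Hence $\int|u_0|^rdx=\lim\int|u_n|^rdx>0$, while weak lower semicontinuity gives $\int|u_0|^2dx\le1$ and $\int|\nabla u_0|^2dx\le1$, so $J(u_0)\le\liminf J(u_n)=\bar S^{-1}$, and since $\bar S^{-1}$ is the infimum the reverse inequality is automatic; thus the infimum is attained at $u_0$.

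Finally I would identify the minimizer and establish uniqueness up to the stated transformations. Computing $J'(u_0)=0$ in a direction $\varphi$ and dividing by $J(u_0)$ gives the logarithmic-derivative identity
$$\frac{r\eta_r}{\int|\nabla u_0|^2dx}\int\nabla u_0\cdot\nabla\varphi\,dx+\frac{r(1-\eta_r)}{\int|u_0|^2dx}\int u_0\varphi\,dx=\frac{r}{\int|u_0|^rdx}\int|u_0|^{r-2}u_0\varphi\,dx,$$
so $u_0$ solves $-a\Delta u_0+b\,u_0=|u_0|^{r-2}u_0$ with explicit positive constants $a,b$ read off from the three norms. A scalar multiple and dilation, namely $u_0(x)=\left(\tfrac{\nu_0}{\mu_0}\right)^{1/(r-2)}w(\sqrt{\nu_0}(x-y))$, reduces this to the canonical equation $-\Delta w+w=|w|^{r-2}w$, and since replacing $u_0$ by $|u_0|$ does not raise $J$ I may take $u_0>0$, whereupon equality in P\'olya--Szeg\H{o} forces $u_0$ to be, up to translation, radial and decreasing. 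The main obstacle is the \emph{uniqueness} claim: it rests on the fact that the positive radial solution $w$ of $-\Delta w+w=|w|^{r-2}w$ is unique up to translation, which is the deep result of Kwong (building on Coffman and McLeod--Serrin). Granting this, every minimizer is a rescaled translate of $w$, and matching the coefficients $a,b$ pins down the precise constants $\nu_0,\mu_0$, giving exactly the stated form.
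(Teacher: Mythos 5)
The paper never proves this lemma at all: it is quoted as a known result, namely Theorem~B of the cited paper of Weinstein (1983). So there is no internal proof to compare against; what you have written is essentially a reconstruction of Weinstein's own argument (minimize the scale-invariant quotient, symmetrize, extract a radial limit, derive the Euler--Lagrange equation, rescale to $-\Delta w+w=|w|^{r-2}w$, and invoke uniqueness of the positive radial ground state due to Kwong/McLeod--Serrin), and in outline it is correct.

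Two points need repair before this stands on its own. First, you invoke the Strauss compact embedding $H^1_{\mathrm{rad}}(\mathbb{R}^N)\hookrightarrow L^r(\mathbb{R}^N)$, which fails for $N=1$, while the lemma allows $N\geq 1$. What actually saves the argument is that your minimizing sequence consists of symmetric-decreasing functions, for which the pointwise bound $|u_n(x)|\leq C_N|x|^{-N/2}\|u_n\|_2$ holds in every dimension; combined with Rellich on balls and interpolation this gives strong $L^r$ convergence, so you should argue via monotonicity rather than via Strauss. Second, your appeal to ``equality in P\'olya--Szeg\H{o} forces radial symmetry'' is shaky: that rigidity statement (Brothers--Ziemer) requires additional hypotheses on the level sets. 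It is also unnecessary: the minimizer you construct is an a.e.\ limit of symmetric-decreasing functions and is therefore itself symmetric-decreasing, and if you want symmetry of an \emph{arbitrary} positive minimizer it follows from Gidas--Ni--Nirenberg applied to the Euler--Lagrange equation, after which Kwong's theorem identifies the profile. Finally, note that ``the minimizer'' in the statement can only be meant modulo the two scaling invariances, sign, and translation, since the quotient is invariant under all of these; the specific constants $\nu_0,\mu_0$ single out Weinstein's normalized representative, i.e.\ the solution of $-\Delta R+\nu_0R=\mu_0|R|^{r-2}R$, and your ``matching of coefficients'' step is exactly the routine rescaling computation that produces it.
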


The following Hardy-Littlewood-Sobolev inequalities can be found in
(\cite{Lieb-Loss 2001}, Theorem 4.3).

\begin{lemma}\label{lem HLS}
Assume that $N\geq 1$, $\alpha\in (0,N)$, $p, r>1$ with
$1/p+(N-\alpha)/N+1/r=2$. Let $u\in L^p(\mathbb{R}^N)$ and $v\in
L^r(\mathbb{R}^N)$. Then there exists a sharp constant
$C(N,\alpha,p)$, independent of $u$ and $v$, such that
\begin{equation*}
\left|\int_{\mathbb{R}^N}\int_{\mathbb{R}^N}\frac{u(x)v(y)}{|x-y|^{N-\alpha}}dxdy\right|\leq
C(N,\alpha,p)\|u\|_p\|v\|_r.
\end{equation*}
\end{lemma}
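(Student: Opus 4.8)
The plan is to deduce the bilinear estimate from the linear mapping property of the Riesz potential combined with the Hardy--Littlewood maximal function. First I would rewrite the double integral as a single convolution: setting $\lambda := N-\alpha \in (0,N)$ and $T_\lambda v(x) := \int_{\mathbb{R}^N} |x-y|^{-\lambda} v(y)\,dy$, the left-hand side equals $\int_{\mathbb{R}^N} u(x)\, T_\lambda v(x)\,dx$. By Hölder's inequality this is bounded by $\|u\|_p \|T_\lambda v\|_{p'}$ with $1/p+1/p'=1$, so it suffices to prove the linear bound $\|T_\lambda v\|_{p'} \le C\|v\|_r$. A short computation shows that the hypothesis $1/p+(N-\alpha)/N+1/r=2$ is equivalent to $1/p' = 1/r - \alpha/N$, and that $p,r>1$ force $1<r<N/\alpha$ and $1<p'<\infty$; these are exactly the ranges in which the Hardy--Littlewood--Sobolev theorem for fractional integration holds.

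For the linear bound I would use Hedberg's pointwise argument. Fix $R>0$ and split $T_\lambda v(x)$ into the contribution of $\{|x-y|<R\}$ and that of $\{|x-y|\ge R\}$. Summing over dyadic annuli and using the Hardy--Littlewood maximal function $Mv$, the near part is estimated by $C R^{\alpha} Mv(x)$, while for the far part Hölder's inequality in $y$ gives $C R^{\alpha - N/r}\|v\|_r$, the integral $\int_{|y|\ge R}|y|^{-\lambda r'}\,dy$ converging precisely because $r<N/\alpha$. Optimizing the bound $|T_\lambda v(x)| \le C_1 R^{\alpha} Mv(x) + C_2 R^{\alpha-N/r}\|v\|_r$ over $R$ (pointwise in $x$) yields the estimate $|T_\lambda v(x)| \le C\,(Mv(x))^{r/p'}\|v\|_r^{\,1-r/p'}$.

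Raising this to the power $p'$, integrating, and invoking the Hardy--Littlewood maximal inequality $\|Mv\|_r \le C\|v\|_r$ (valid since $r>1$) gives $\|T_\lambda v\|_{p'}^{p'} \le C\|v\|_r^{p'-r}\|Mv\|_r^{r} \le C\|v\|_r^{p'}$, hence $\|T_\lambda v\|_{p'}\le C\|v\|_r$ and the desired bilinear inequality. The sharp constant $C(N,\alpha,p)$ is then defined as the supremum of the ratio of the two sides over all nonzero admissible $u,v$; the estimate just proved shows this supremum is finite and independent of $u$ and $v$, which is all the statement asserts.

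I expect the main obstacle to be the near-part estimate and the optimization in $R$: controlling $\int_{|x-y|<R}|x-y|^{-\lambda}|v(y)|\,dy$ by $C R^\alpha Mv(x)$ requires the dyadic decomposition of the ball together with the definition of $Mv$, and the balancing choice $R=R(x)$ must be made pointwise so as to depend on both $Mv(x)$ and $\|v\|_r$. The only hypotheses used are $r>1$ (for the maximal inequality), $p>1$ i.e. $p'<\infty$, and $r<N/\alpha$ (for convergence of the far integral), all of which are encoded in the scaling identity. If one wanted in addition the \emph{attained} optimal value and its extremals, a separate argument via the Riesz rearrangement inequality --- reducing to radially symmetric nonincreasing $u,v$ --- together with a compactness analysis would be required, but this is not needed for the inequality as stated.
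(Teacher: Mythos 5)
Your argument is correct: it is the classical Hedberg proof of the fractional integration theorem. The exponent bookkeeping checks out ($1/p'=1/r-\alpha/N$ follows from the stated scaling relation, and $p,r>1$ do force $1<r<N/\alpha$ and $p'<\infty$), the near-part bound $CR^{\alpha}Mv(x)$ by dyadic decomposition, the far-part bound $CR^{\alpha-N/r}\|v\|_r$ (convergent since $(N-\alpha)r'>N$), the pointwise optimization in $R$, and the maximal inequality for $r>1$ assemble into a complete proof of the bilinear estimate. Note, however, that the paper does not prove this lemma at all: it is a preliminary quoted verbatim from Lieb--Loss, \emph{Analysis}, Theorem 4.3. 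So your route necessarily differs from the source the paper relies on: there the inequality is proved by rearrangement techniques (Riesz rearrangement inequality and layer-cake arguments), which yield strictly more than your maximal-function argument, namely the existence of optimizers and the explicit sharp constant in the conjugate diagonal case $p=r=\frac{2N}{N+\alpha}$. That extra information is exactly what this paper exploits elsewhere --- Lemma 3.2 identifies the extremals $a\bigl(\delta/(\delta^2+|x-y|^2)\bigr)^{N/2}$ of $S_\alpha$, which drive Theorem 1.1, Theorem 1.2 and Remark 1.3 --- whereas your Hedberg argument gives finiteness of the best constant but no information on its value or on extremal functions. Your reading of ``sharp constant'' (the supremum of the ratio, shown finite) is legitimate for the lemma as stated, and your closing remark that identifying the optimal value and extremals would require a separate rearrangement/compactness argument is precisely the gap between your elementary proof and the cited theorem.
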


\begin{lemma}\label{lem3.2}
Let $N\geq 1$, $\alpha\in (0,N)$, and $S_\alpha$ be defined in
(\ref{e1.2}). Then $S_\alpha>0$ and  the minimizer of $S_\alpha$ is
$$u(x)=a\left(\frac{\delta}{\delta^2+|x-y|^2}\right)^{N/2},\ a\in
\mathbb{R},\ \delta>0,\ y\in \mathbb{R}^N.$$
\end{lemma}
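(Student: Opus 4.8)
The plan is to read $S_\alpha$ as the reciprocal of the sharp constant in a Hardy--Littlewood--Sobolev (HLS) inequality and to invoke the known classification of HLS extremizers. First I would choose the exponents in Lemma \ref{lem HLS} to match the nonlinearity: taking $f=g=|u|^{\frac{N+\alpha}{N}}$ forces $p=r=\frac{2N}{N+\alpha}$, since $\frac1p+\frac{N-\alpha}{N}+\frac1r=2$ with $p=r$ gives exactly $\frac2p=\frac{N+\alpha}{N}$. With this choice $\big\||u|^{\frac{N+\alpha}{N}}\big\|_{\frac{2N}{N+\alpha}}^{2}=\left(\int_{\mathbb{R}^N}|u|^2\,dx\right)^{\frac{N+\alpha}{N}}$, so writing the Riesz potential as $I_\alpha=A_\alpha(N)|\cdot|^{-(N-\alpha)}$ and applying Lemma \ref{lem HLS} yields
\begin{equation*}
\int_{\mathbb{R}^N}(I_\alpha\ast|u|^{\frac{N+\alpha}{N}})|u|^{\frac{N+\alpha}{N}}\,dx\le A_\alpha(N)\,C(N,\alpha)\left(\int_{\mathbb{R}^N}|u|^2\,dx\right)^{\frac{N+\alpha}{N}}.
\end{equation*}
Raising this to the power $\frac{N}{N+\alpha}$ and rearranging shows that the quotient defining $S_\alpha$ is bounded below by $\big(A_\alpha(N)C(N,\alpha)\big)^{-\frac{N}{N+\alpha}}>0$, which gives $S_\alpha>0$.

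Next, to identify the minimizers I would trace through the equality case. Minimizing the quotient in (\ref{e1.2}) is equivalent to achieving equality in the HLS inequality above, and this is the conformally invariant (diagonal, $p=r$) case of the sharp HLS inequality, whose extremizers are completely classified (Lieb, cf. \cite{Lieb-Loss 2001}, Theorem 4.3): equality holds precisely when the common function is, up to a positive constant, a translate and dilate of $(1+|x|^2)^{-\frac{2N-\lambda}{2}}$ with $\lambda=N-\alpha$. Since $\frac{2N-\lambda}{2}=\frac{N+\alpha}{2}$, this means
\begin{equation*}
|u(x)|^{\frac{N+\alpha}{N}}=a_0\left(\frac{1}{\delta^2+|x-y|^2}\right)^{\frac{N+\alpha}{2}}
\end{equation*}
for some $a_0>0$, $\delta>0$, $y\in\mathbb{R}^N$. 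Taking both sides to the power $\frac{N}{N+\alpha}$ gives $|u(x)|=a_0^{\frac{N}{N+\alpha}}(\delta^2+|x-y|^2)^{-N/2}$; absorbing the factor $\delta^{N/2}$ into a new real constant $a$ puts the minimizer in the stated form $u(x)=a\big(\frac{\delta}{\delta^2+|x-y|^2}\big)^{N/2}$ (the quotient depends only on $|u|$, so $u$ may be taken real and positive).

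Finally, I would check that these candidates are genuinely admissible, i.e.\ that they lie in $H^1(\mathbb{R}^N)$, so that the infimum over $H^1$ in (\ref{e1.2}) is actually attained. Since $u(x)$ decays like $|x|^{-N}$ and $\nabla u$ like $|x|^{-N-1}$, both $|u|^2\sim|x|^{-2N}$ and $|\nabla u|^2\sim|x|^{-2N-2}$ are integrable at infinity for every $N\ge1$, and $u$ is smooth and bounded near $x=y$; hence $u\in H^1(\mathbb{R}^N)$. Because the HLS bound holds for all $u\in L^2(\mathbb{R}^N)\supseteq H^1(\mathbb{R}^N)$ while these extremizers already belong to $H^1$, the $H^1$-infimum equals the HLS value and is achieved exactly by the displayed family.

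The main obstacle I anticipate is not the positivity of $S_\alpha$ (a direct application of HLS) but the precise identification of the extremizers: one must correctly match the lower critical exponent $\frac{N+\alpha}{N}$ to the conformal/diagonal case $p=r=\frac{2N}{N+\alpha}$ of the sharp HLS inequality and then import Lieb's classification of equality cases, being careful that the characterization is stated for $f=|u|^{\frac{N+\alpha}{N}}$ rather than for $u$ itself. The only other point requiring care is the book-keeping that passes from the $L^2$-level HLS extremizers back to the $H^1$-admissible functions of the form quoted in the statement.
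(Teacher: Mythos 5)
Your proof is correct and is essentially the argument the paper relies on: the paper states this lemma without proof in its Preliminaries, as a known consequence of the sharp diagonal ($p=r=\frac{2N}{N+\alpha}$) Hardy--Littlewood--Sobolev inequality and Lieb's classification of its extremizers (\cite{Lieb-Loss 2001}, Theorem 4.3). Your exponent bookkeeping, the passage from $|u|^{\frac{N+\alpha}{N}}$ back to $u$, and the check that the extremizers lie in $H^1(\mathbb{R}^N)$ (so the $H^1$-infimum coincides with the $L^2$/HLS value and is attained) reconstruct exactly that standard route.
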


The following Br\'{e}zis-Lieb-type lemma for nonlocal nonlinearities
can be found in  (\cite{Moroz-Schaftingen JFA 2013}, Lemma 2.4).

\begin{lemma}\label{lem3.4}
Let $N\geq 1$, $\alpha\in (0,N)$ and $\{u_n\}$ be a bounded sequence
in $L^2(\mathbb{R}^N)$. If $u_n \to u$ a.e. in $\mathbb{R}^N$, then
\begin{equation*}
\begin{split}
\int_{\mathbb{R}^N}(I_\alpha\ast|u_n|^{\frac{N+\alpha}{N}})|u_n|^{\frac{N+\alpha}{N}}dx
&=\int_{\mathbb{R}^N}(I_\alpha\ast|u_n-u|^{\frac{N+\alpha}{N}})|u_n-u|^{\frac{N+\alpha}{N}}dx\\
&\quad
+\int_{\mathbb{R}^N}(I_\alpha\ast|u|^{\frac{N+\alpha}{N}})|u|^{\frac{N+\alpha}{N}}dx+o_n(1).
\end{split}
\end{equation*}
\end{lemma}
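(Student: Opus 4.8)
The plan is to prove Theorem \ref{lem3.4}, the Br\'{e}zis--Lieb-type splitting for the nonlocal Choquard nonlinearity. Set $p := \frac{N+\alpha}{N}$, so $|u_n|^p \in L^{N/\alpha}(\mathbb{R}^N)$ whenever $u_n \in L^2$, and recall that by Lemma \ref{lem HLS} (Hardy--Littlewood--Sobolev with exponents $p_1 = r_1 = \frac{2N}{N+\alpha}$ applied to $|u_n|^p$) the quantity $\int (I_\alpha \ast |u_n|^p)|u_n|^p\,dx$ is finite and controlled by $\||u_n|^p\|_{2N/(N+\alpha)}^2$. The core idea is the classical Br\'{e}zis--Lieb maneuver, but carried out on the \emph{bilinear} form $B(f,g) := \int_{\mathbb{R}^N}\int_{\mathbb{R}^N} \frac{f(x)g(y)}{|x-y|^{N-\alpha}}\,dx\,dy$, which is symmetric and, by Hardy--Littlewood--Sobolev, bounded on $L^{2N/(N+\alpha)} \times L^{2N/(N+\alpha)}$. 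Writing $v_n := |u_n|^p$, $v := |u|^p$, and $w_n := |u_n - u|^p$, the target is the statement $B(v_n,v_n) = B(w_n,w_n) + B(v,v) + o_n(1)$.

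First I would reduce everything to the scalar Br\'{e}zis--Lieb lemma applied in $L^{2N/(N+\alpha)}$. The ordinary Br\'{e}zis--Lieb lemma gives $\|v_n\|_{2N/(N+\alpha)}^{2N/(N+\alpha)} = \|v_n - v\|_{2N/(N+\alpha)}^{2N/(N+\alpha)} + \|v\|_{2N/(N+\alpha)}^{2N/(N+\alpha)} + o_n(1)$, but more useful here is the fact that $u_n \to u$ a.e. combined with boundedness of $\{u_n\}$ in $L^2$ forces $v_n - v - w_n \to 0$ strongly in $L^{2N/(N+\alpha)}(\mathbb{R}^N)$. This last strong convergence is the technical heart of the argument: it is the nonlocal analogue of the pointwise-convergence-plus-uniform-bound input, and it follows from the elementary inequality $\bigl| |a|^p - |a-b|^p - |b|^p \bigr| \le C_\varepsilon |b|^p + \varepsilon |a-b|^p$ applied with $a = u_n$, $b = u$, together with a Vitali / dominated-convergence argument exactly as in the standard Br\'{e}zis--Lieb proof. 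Denote $g_n := v_n - v - w_n$, so $\|g_n\|_{2N/(N+\alpha)} \to 0$.

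With that in hand I would expand the bilinear form using $v_n = w_n + v + g_n$ and bilinearity:
\begin{equation*}
B(v_n,v_n) = B(w_n,w_n) + B(v,v) + B(g_n,g_n) + 2B(w_n,v) + 2B(w_n,g_n) + 2B(v,g_n).
\end{equation*}
The plan is then to show every cross term is $o_n(1)$. The terms involving $g_n$ are immediate from boundedness of $B$: for instance $|B(w_n,g_n)| \le C\|w_n\|_{2N/(N+\alpha)}\|g_n\|_{2N/(N+\alpha)}$, and since $\{w_n\}$ is bounded in $L^{2N/(N+\alpha)}$ while $\|g_n\|_{2N/(N+\alpha)} \to 0$, this vanishes; similarly for $B(v,g_n)$ and $B(g_n,g_n)$. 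The one genuinely remaining term is $B(w_n,v) = \int (I_\alpha \ast |u_n-u|^p)\,|u|^p\,dx$, and showing $B(w_n,v) \to 0$ is where the real work lies.

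The main obstacle is precisely this term $B(w_n,v) \to 0$. Here the mechanism is weak convergence rather than norm smallness: since $u_n - u \to 0$ a.e. and $\{w_n\}$ is bounded in $L^{2N/(N+\alpha)}$, we have $w_n \rightharpoonup 0$ weakly in $L^{2N/(N+\alpha)}(\mathbb{R}^N)$ (pointwise a.e. convergence to $0$ plus boundedness in a reflexive $L^s$ space identifies the only possible weak limit as $0$). It therefore suffices to argue that $v \mapsto B(\cdot, v)$ is a bounded linear functional on $L^{2N/(N+\alpha)}$, i.e.\ that $I_\alpha \ast v = I_\alpha \ast |u|^p$ lies in the dual space $L^{2N/(N-\alpha)}(\mathbb{R}^N)$; this is exactly the content of the Hardy--Littlewood--Sobolev inequality in its mapping form, $\|I_\alpha \ast v\|_{2N/(N-\alpha)} \le C\|v\|_{2N/(N+\alpha)} < \infty$. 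Testing the weakly-null sequence $w_n$ against this fixed dual element then yields $B(w_n,v) = \int w_n \,(I_\alpha \ast v)\,dx \to 0$. Collecting the vanishing cross terms completes the identity. The only point demanding care is justifying the strong $L^{2N/(N+\alpha)}$ convergence $g_n \to 0$ without an a priori uniform domination, which is handled by the Br\'{e}zis--Lieb truncation inequality above and Fatou/Vitali; everything else is bilinear bookkeeping plus one application of duality.
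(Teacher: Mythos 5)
Your proof is correct, and it is essentially the argument behind the result: the paper does not prove Lemma \ref{lem3.4} itself but quotes it from Moroz and Van Schaftingen (J. Funct. Anal. 2013, Lemma 2.4), whose proof uses exactly your ingredients --- the refined Br\'ezis--Lieb lemma giving $|u_n|^{\frac{N+\alpha}{N}}-|u_n-u|^{\frac{N+\alpha}{N}}\to|u|^{\frac{N+\alpha}{N}}$ strongly in $L^{2N/(N+\alpha)}(\mathbb{R}^N)$, the Hardy--Littlewood--Sobolev bilinear bound, and the weak convergence $|u_n-u|^{\frac{N+\alpha}{N}}\rightharpoonup 0$ in $L^{2N/(N+\alpha)}(\mathbb{R}^N)$. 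The only slip is at the start: for $u_n\in L^2(\mathbb{R}^N)$ one has $|u_n|^{\frac{N+\alpha}{N}}\in L^{2N/(N+\alpha)}(\mathbb{R}^N)$, not $L^{N/\alpha}(\mathbb{R}^N)$, but this is harmless since the rest of your argument uses the correct exponent throughout.
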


\section{The autonomous problem}
\setcounter{section}{3} \setcounter{equation}{0}

\subsection{ The case $2<q<2+\frac{4}{N}$}

\begin{lemma}\label{lem2.1}
Let $N\geq 1$, $\alpha\in (0,N)$, $c,\gamma,\mu>0$, and
$2<q<2+\frac{4}{N}$. Then

(1) $E_q$ defined in (\ref{e1.10}) is bounded below and coercive on
$S(c)$;

(2) $\sigma (c)<-\frac{\gamma
N}{2(N+\alpha)}S_\alpha^{-\frac{N+\alpha}{N}}c^{\frac{2(N+\alpha)}{N}}$,
where $\sigma (c)$ is defined in  Theorem \ref{thm1.1};

(3) For $0<c_1<c_2$, there holds   $\frac{c_1^2}{c_2^2}\sigma
(c_2)<\sigma (c_1)$.
\end{lemma}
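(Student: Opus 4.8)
The plan is to treat the three parts in order, since (1) supplies the a priori bounds that (2) and (3) rely on. Throughout write $\mathcal{D}(u):=\int_{\mathbb{R}^N}(I_\alpha\ast|u|^{\frac{N+\alpha}{N}})|u|^{\frac{N+\alpha}{N}}dx$ for the nonlocal term. The decisive structural fact, coming from the lower critical exponent, is that $\mathcal{D}$ is governed purely by the $L^2$-norm: the definition (\ref{e1.2}) of $S_\alpha$ together with the Hardy--Littlewood--Sobolev inequality (Lemma \ref{lem HLS}) gives, for every $u\in S(c)$,
\[
\mathcal{D}(u)\le S_\alpha^{-\frac{N+\alpha}{N}}\|u\|_2^{\frac{2(N+\alpha)}{N}}=S_\alpha^{-\frac{N+\alpha}{N}}c^{\frac{2(N+\alpha)}{N}}.
\]
For the local term I would invoke the Gagliardo--Nirenberg inequality of Lemma \ref{lem3.1}: since $\eta_q=\frac N2-\frac Nq$ satisfies $q\eta_q=\frac{N(q-2)}{2}<2$ precisely because $q<2+\frac4N$, one has $\|u\|_q^q\le \bar S\, c^{q(1-\eta_q)}\|\nabla u\|_2^{q\eta_q}$ with $q\eta_q<2$. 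Inserting both estimates into (\ref{e1.10}) yields, on $S(c)$, a lower bound of the form $E_q(u)\ge \tfrac12\|\nabla u\|_2^2-C_1-C_2\|\nabla u\|_2^{q\eta_q}$ with $C_1,C_2$ depending only on $c,\gamma,\mu$. Because $q\eta_q<2$, the quadratic term dominates, which gives both coercivity (note $\|u\|^2=\|\nabla u\|_2^2+c^2$ on $S(c)$) and boundedness from below, proving (1).

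For (2) the idea is to test $\sigma(c)$ against a dilation of the $S_\alpha$-extremal. Let $U$ be the minimizer of $S_\alpha$ from Lemma \ref{lem3.2}, rescaled in amplitude so that $\|U\|_2=c$; since the quotient in (\ref{e1.2}) is invariant under amplitude scaling, $U$ remains a minimizer, so the equality $\mathcal{D}(U)=S_\alpha^{-\frac{N+\alpha}{N}}c^{\frac{2(N+\alpha)}{N}}$ holds, and one checks from the explicit decay that $U\in H^1(\mathbb{R}^N)\cap L^q(\mathbb{R}^N)$ for every $N\ge1$. I would then introduce the $L^2$-preserving dilation $U_t(x):=t^{N/2}U(tx)\in S(c)$, under which $\|\nabla U_t\|_2^2=t^2\|\nabla U\|_2^2$, $\mathcal{D}(U_t)=\mathcal{D}(U)$ (the nonlocal term is scale invariant at the lower critical exponent), and $\|U_t\|_q^q=t^{\frac{N(q-2)}{2}}\|U\|_q^q$. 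Hence
\[
E_q(U_t)=-\frac{\gamma N}{2(N+\alpha)}S_\alpha^{-\frac{N+\alpha}{N}}c^{\frac{2(N+\alpha)}{N}}+\frac{t^2}{2}\|\nabla U\|_2^2-\frac{\mu}{q}t^{\frac{N(q-2)}{2}}\|U\|_q^q.
\]
Since $\frac{N(q-2)}{2}<2$, for small $t>0$ the negative local term of order $t^{N(q-2)/2}$ dominates the positive kinetic term of order $t^2$, so $E_q(U_t)$ is strictly below the threshold; as $\sigma(c)\le E_q(U_t)$, this proves (2).

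For (3), set $\theta:=c_1/c_2\in(0,1)$ and exploit the amplitude scaling $u\mapsto \theta u$, which maps $S(c_2)$ bijectively onto $S(c_1)$. A direct computation gives, for every $u\in S(c_2)$,
\[
E_q(\theta u)-\theta^2E_q(u)=\frac{\gamma N}{2(N+\alpha)}\bigl(\theta^{2}-\theta^{\frac{2(N+\alpha)}{N}}\bigr)\mathcal{D}(u)+\frac{\mu}{q}\bigl(\theta^{2}-\theta^{q}\bigr)\|u\|_q^q,
\]
and both coefficients are strictly positive because $\frac{2(N+\alpha)}{N}>2$ and $q>2$ force $\theta^{\frac{2(N+\alpha)}{N}},\theta^{q}<\theta^{2}$. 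Thus $E_q(\theta u)>\theta^2E_q(u)\ge\theta^2\sigma(c_2)$ for every $u$, which already yields $\sigma(c_1)\ge\theta^2\sigma(c_2)$. To upgrade to a strict inequality I would take a minimizing sequence $\{v_n\}\subset S(c_1)$ for $\sigma(c_1)$ and set $u_n:=\theta^{-1}v_n\in S(c_2)$. The key point is that the excess term is bounded away from zero: since $\sigma(c_1)<0$ by (2), the quantity $\mathcal{D}(v_n)+\|v_n\|_q^q$ cannot tend to zero (otherwise $E_q(v_n)\ge \tfrac12\|\nabla v_n\|_2^2+o_n(1)\ge o_n(1)$, contradicting $E_q(v_n)\to\sigma(c_1)<0$), and rewriting the excess in terms of $v_n$ shows it is $\ge \delta>0$ for large $n$. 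Passing to the limit in $E_q(v_n)\ge \theta^2\sigma(c_2)+\delta$ gives $\sigma(c_1)\ge \theta^2\sigma(c_2)+\delta>\theta^2\sigma(c_2)$.

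The main obstacle is the strictness in part (3): the pointwise inequality $E_q(\theta u)>\theta^2E_q(u)$ degenerates after taking the infimum, so one must produce a uniform positive lower bound on the excess along a minimizing sequence, which is exactly where the sign $\sigma(c_1)<0$ from part (2) is used. A secondary technical point is confirming, in (2), that the explicit $S_\alpha$-extremal lies in $H^1(\mathbb{R}^N)$ and realizes the threshold with equality.
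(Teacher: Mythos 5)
Your proposal is correct and follows essentially the same route as the paper: part (1) via the same Gagliardo--Nirenberg and $S_\alpha$ bounds with $q\eta_q<2$; part (2) by testing with an amplitude-normalized $S_\alpha$-extremal under the $L^2$-preserving dilation, using that the nonlocal term is dilation-invariant and the negative $t^{N(q-2)/2}$ term dominates for small $t$; part (3) by the same amplitude-scaling identity along a minimizing sequence for $\sigma(c_1)$, with strictness secured exactly as in the paper by the sign $\sigma(c_1)<0$ ruling out $\mathcal{D}(v_n)+\|v_n\|_q^q\to 0$. The only cosmetic difference is that you scale down from $S(c_2)$ while the paper scales up from $S(c_1)$, which are mirror images of the same computation.
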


\begin{proof}
(1) By (\ref{e1.1}) and (\ref{e1.2}), for any $u\in S(c)$, one has
\begin{equation*}
E_q(u)\geq \frac{1}{2}\|\nabla u\|_2^2-\frac{\gamma
N}{2(N+\alpha)}S_\alpha^{-\frac{N+\alpha}{N}}c^{\frac{2(N+\alpha)}{N}}-\frac{\mu}{q}\bar{S}c^{q(1-\eta_q)}\|\nabla
u\|_2^{q\eta_q},
\end{equation*}
which implies that $E_q$ is bounded below and coercive on $S(c)$ for
$q\eta_q<2$.

(2) By Lemma \ref{lem3.2}, we choose $v$ such that
\begin{equation*}
S_\alpha\left(\int_{\mathbb{R}^N}(I_\alpha\ast|v|^{\frac{N+\alpha}{N}})|v|^{\frac{N+\alpha}{N}}dx\right)^{\frac{N}{N+\alpha}}=
\int_{\mathbb{R}^N}|v|^2dx.
\end{equation*}
Define $u:=\frac{cv}{\|v\|_2}$ and $u_{\tau}(x):={\tau}^{N/2}u(\tau
x)$ for $\tau>0$, then $u_\tau\in S(c)$ for all $\tau>0$. By direct
calculations, we deduce that
\begin{equation*}
\begin{split}
E_{q}(u_\tau)&=\frac{1}{2}\tau^2\|\nabla u\|_2^2-\frac{\gamma
N}{2(N+\alpha)}\int_{\mathbb{R}^N}(I_\alpha\ast|u|^{\frac{N+\alpha}{N}})|u|^{\frac{N+\alpha}{N}}dx
-\frac{\mu}{q}\tau^{q\eta_q}\|u\|_q^q\\
&=\frac{1}{2}\tau^2\|\nabla
u\|_2^2-\frac{\mu}{q}\tau^{q\eta_q}\|u\|_q^q-\frac{\gamma
N}{2(N+\alpha)}S_\alpha^{-\frac{N+\alpha}{N}}c^{\frac{2(N+\alpha)}{N}}\\
&<-\frac{\gamma
N}{2(N+\alpha)}S_\alpha^{-\frac{N+\alpha}{N}}c^{\frac{2(N+\alpha)}{N}}
\end{split}
\end{equation*}
for $\tau>0$ small enough. This proves (2).

(3) Let $\{ u_n\}  \subset  S(c_1)$ be a bounded minimizing sequence
for $\sigma (c_1)$ and let $\nu:=\frac{c_2}{c_1}$. Then $\nu>1$,
 $\{\nu u_n\}\subset S(c_2)$ and
\begin{equation*}
\begin{split}
E_q(\nu u_n)&=\nu^2 E_{q}(u_n)+\frac{\gamma
N}{2(N+\alpha)}(\nu^2-\nu^{\frac{2(N+\alpha)}{N}})\int_{\mathbb{R}^N}(I_\alpha\ast|u_n|^{\frac{N+\alpha}{N}})|u_n|^{\frac{N+\alpha}{N}}dx\\
&\qquad +\frac{\mu}{q}(\nu^2-\nu^q)\|u_n\|_q^q,
\end{split}
\end{equation*}
which implies  that $E_q(\nu u_n) < \nu^2 E_{q}(u_n)$. Consequently,
$\sigma (\nu c_1) \leq \nu^2\sigma (c_1)$, where the equality holds
if and only if
\begin{equation*}
\int_{\mathbb{R}^N}(I_\alpha\ast|u_n|^{\frac{N+\alpha}{N}})|u_n|^{\frac{N+\alpha}{N}}dx+\|u_n\|_q^q\to
0\ \text{as}\ n\to +\infty.
\end{equation*}
But this is not possible, since otherwise we find that
\begin{equation*}
0>\sigma(c_1)=\lim_{n\to +\infty} E_q(u_n)= \lim_{n\to +\infty}
\left(\frac{1}{2}\|\nabla u_n\|_2^2+o_n(1)\right)\geq 0.
\end{equation*}
Hence $\sigma (\nu c_1) < \nu^2\sigma (c_1)$, that is,
$\frac{c_1^2}{c_2^2}\sigma (c_2)<\sigma (c_1)$.  The proof is
complete.
\end{proof}

\begin{lemma}\label{lem2.2}
Assume that $N\geq 1$, $\alpha\in (0,N)$, $c,\gamma,\mu>0$, and
$2<q<2+\frac{4}{N}$. Let $\{ u_n\} \subset S(c)$ be a sequence such
that  $E_q(u_n)\rightarrow \sigma(c)$. Then the sequence $\{ u_n\}$
is relatively compact in $H^1(\mathbb{R}^N)$ up to translations.
\end{lemma}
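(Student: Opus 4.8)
The plan is to run a concentration-compactness argument on the minimizing sequence, excluding vanishing by means of the strict inequality in Lemma~\ref{lem2.1}(2) and excluding a splitting of mass by means of the strict subadditivity encoded in Lemma~\ref{lem2.1}(3), so as to recover strong convergence up to a single translation. First I would note that $\{u_n\}$ is bounded in $H^1(\mathbb{R}^N)$: since $E_q(u_n)\to\sigma(c)$ the sequence $E_q(u_n)$ is bounded, and coercivity from Lemma~\ref{lem2.1}(1) gives the claim. Passing to a subsequence, $u_n$ converges weakly in $H^1$, strongly in $L^r_{loc}$ for $2\le r<\frac{2N}{(N-2)^+}$, and a.e. To rule out vanishing, set $\delta:=\limsup_n\sup_{y\in\mathbb{R}^N}\int_{B_1(y)}|u_n|^2\,dx$. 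If $\delta=0$, then Lions' vanishing lemma forces $u_n\to 0$ in $L^q(\mathbb{R}^N)$, so $\|u_n\|_q^q\to0$; combining this with the uniform bound $\int_{\mathbb{R}^N}(I_\alpha\ast|u_n|^{\frac{N+\alpha}{N}})|u_n|^{\frac{N+\alpha}{N}}dx\le S_\alpha^{-\frac{N+\alpha}{N}}c^{\frac{2(N+\alpha)}{N}}$ coming from (\ref{e1.2}) and dropping the nonnegative kinetic term, I would obtain
\[
E_q(u_n)\ge -\frac{\gamma N}{2(N+\alpha)}S_\alpha^{-\frac{N+\alpha}{N}}c^{\frac{2(N+\alpha)}{N}}+o_n(1),
\]
whose limit contradicts Lemma~\ref{lem2.1}(2). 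Hence $\delta>0$, and there are points $y_n$ with $\int_{B_1(y_n)}|u_n|^2\,dx\ge\delta/2$ for large $n$.

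Next I would translate. Putting $\tilde u_n(x):=u_n(x+y_n)$, translation invariance gives $\tilde u_n\in S(c)$ and $E_q(\tilde u_n)=E_q(u_n)\to\sigma(c)$, and along a further subsequence $\tilde u_n\rightharpoonup u$ in $H^1$. The compact embedding $H^1(B_1(0))\hookrightarrow L^2(B_1(0))$ upgrades the lower bound $\int_{B_1(0)}|\tilde u_n|^2\,dx\ge\delta/2$ to $\int_{B_1(0)}|u|^2\,dx\ge\delta/2$, so $u\neq0$; write $c_1:=\|u\|_2\in(0,c]$. Setting $v_n:=\tilde u_n-u$, the Br\'ezis-Lieb lemma applied to $\|\cdot\|_2^2$, $\|\cdot\|_q^q$ and $\|\nabla\cdot\|_2^2$, together with the nonlocal splitting of Lemma~\ref{lem3.4}, yields
\[
E_q(\tilde u_n)=E_q(u)+E_q(v_n)+o_n(1),\qquad \|v_n\|_2^2\to c^2-c_1^2=:c_2^2.
\]

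Finally I would prove $c_2=0$. Since $u\in S(c_1)$ we have $E_q(u)\ge\sigma(c_1)$; if $c_2>0$, then rescaling $v_n$ to $\frac{c_2}{\|v_n\|_2}v_n\in S(c_2)$ changes $E_q$ by $o_n(1)$ (because $\|v_n\|_2\to c_2>0$ and $\{v_n\}$ is bounded), so $\liminf_n E_q(v_n)\ge\sigma(c_2)$ and hence $\sigma(c)\ge\sigma(c_1)+\sigma(c_2)$. However, Lemma~\ref{lem2.1}(3) states that $c\mapsto\sigma(c)/c^2$ is strictly decreasing, so from $0<c_1,c_2<c$ and $c_1^2+c_2^2=c^2$ one gets $\sigma(c)=c_1^2\frac{\sigma(c)}{c^2}+c_2^2\frac{\sigma(c)}{c^2}<\sigma(c_1)+\sigma(c_2)$, a contradiction. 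Therefore $c_2=0$, i.e. $c_1=c$ and $\|v_n\|_2\to0$; then the Gagliardo-Nirenberg inequality (\ref{e1.1}) gives $\|v_n\|_q^q\to0$ and the $S_\alpha$-bound (\ref{e1.2}) makes the nonlocal term of $v_n$ vanish, so $E_q(v_n)=\tfrac12\|\nabla v_n\|_2^2+o_n(1)$. Combined with $E_q(u)\ge\sigma(c)$ and $\sigma(c)=E_q(u)+\lim_n E_q(v_n)$, this forces $\|\nabla v_n\|_2\to0$, whence $\tilde u_n\to u$ strongly in $H^1$. The main obstacle is precisely the exclusion of vanishing: owing to the ``bubbling at infinity'' feature of the lower critical exponent the nonlocal term need not vanish under Lions vanishing, so the argument cannot make it small and must instead retain it as a uniform upper bound and exploit the strict gap in Lemma~\ref{lem2.1}(2).
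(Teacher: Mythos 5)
Your proposal is correct and follows essentially the same concentration-compactness scheme as the paper: coercivity gives boundedness, vanishing is excluded by Lions' lemma combined with the strict inequality $\sigma(c)<-\frac{\gamma N}{2(N+\alpha)}S_\alpha^{-\frac{N+\alpha}{N}}c^{\frac{2(N+\alpha)}{N}}$ of Lemma~\ref{lem2.1}(2), dichotomy is excluded via the Br\'ezis--Lieb splitting (including Lemma~\ref{lem3.4}) and the strict subadditivity of Lemma~\ref{lem2.1}(3), and the residual gradient is then killed to get strong $H^1$ convergence. The only cosmetic differences are that the paper bounds $E_q(v_n)\ge\sigma(d_n)$ directly with $d_n=\|v_n\|_2$ instead of rescaling $v_n$ onto $S(c_2)$, and finishes via strong $L^2$ (hence $L^s$) convergence plus weak lower semicontinuity rather than a second Br\'ezis--Lieb residual analysis.
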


\begin{proof}
It follows  from Lemma \ref{lem2.1}(1) that  $\{u_n\}$ is bounded in
$H^1(\mathbb{R}^N)$. We claim that there exits $\beta>0$ such that
\begin{equation}\label{e2.1}
\lim_{n\rightarrow +\infty}\sup_{y\in \mathbb{R}^N} \int_{B_R(y)} |
u_n|^2dx \geq \beta\ \text{for\ some}\  R
>0.
\end{equation}
Suppose the contrary. Then by the Lions lemma (\cite{Willem 1996},
Lemma 1.21), $u_n \rightarrow 0$ in $L^s(\mathbb{R}^N)$ for $2 < s <
\frac{2N}{(N-2)^+}$. Using this together with (\ref{e1.1}) and
(\ref{e1.2}) gives
\begin{equation*}
\begin{split}
\sigma (c) + o_n(1)&= E_{q}(u_n)\\
&=\frac{1}{2}\|\nabla u_n\|_2^2-\frac{\gamma
N}{2(N+\alpha)}\int_{\mathbb{R}^N}(I_\alpha\ast|u_n|^{\frac{N+\alpha}{N}})|u_n|^{\frac{N+\alpha}{N}}dx
+o_n(1)\\
&\geq \frac{1}{2}\|\nabla u_n\|_2^2-\frac{\gamma
N}{2(N+\alpha)}S_\alpha^{-\frac{N+\alpha}{N}}c^{\frac{2(N+\alpha)}{N}}+o_n(1),
\end{split}
\end{equation*}
which contradicts Lemma \ref{lem2.1}(2). So (\ref{e2.1}) holds.

By (\ref{e2.1}), there exists $\{y_n\}\subset\mathbb{R}^N$ such that
$u_n(x+y_n)\rightharpoonup u\not\equiv 0$ in $H^1(\mathbb{R}^N)$.
Set $v_n=u_n(x+y_n)-u$. If $\|u\|_2=b\neq c$, then $b\in (0,c)$.
Setting $d_n=\|v_n\|_2$, and by using
\begin{equation*}
\|u_n(x+y_n)\|_2^2=\|v_n\|_2^2+\|u\|_2^2+o_n(1),
\end{equation*}
we obtain that  $d_n\in (0,c)$ for $n$ large enough and
$\|v_n\|_2\to d$ with $c^2=d^2+b^2$. It follows from Lemma
\ref{lem3.4}, Lemma \ref{lem2.1}(3) and (\cite{Moroz-Schaftingen JFA
2013}, Lemma 2.5) that
\begin{equation*}
\begin{split}
\sigma(c)+o_n(1)=E_q(u_n(x+y_n))&= E_q(v_n)+E_q(u)+o_n(1) \\
&\geq \sigma(d_n)+\sigma(b)+o_n(1)\\
&\geq \frac{d_n^2}{c^2}\sigma(c)+\sigma(b)+o_n(1).
\end{split}
\end{equation*}
Letting $n\to+\infty$ and using again Lemma \ref{lem2.1}(3), we find
that
\begin{equation*}
\sigma(c)\geq
\frac{d^2}{c^2}\sigma(c)+\sigma(b)>\frac{d^2}{c^2}\sigma(c)+\frac{b^2}{c^2}\sigma(c)=\sigma(c),
\end{equation*}
which is a contradiction. So $\|u\|_2=c$ and $u_n(x+y_n)\to u$ in
$L^2(\mathbb{R}^N)$ and then $u_n(x+y_n)\to u$ in
$L^s(\mathbb{R}^N)$ for $2\leq s<\frac{2N}{(N-2)^+}$. Consequently,
we obtain that
\begin{equation*}
\sigma(c)=\lim_{n\to +\infty}E_{q}(u_n)=\lim_{n\to
+\infty}E_{q}(u_n(x+y_n))\geq  E_{q}(u)\geq \sigma(c).
\end{equation*}
This shows that $E_{q}(u)=\sigma(c)$ and $\|\nabla u_n(x+y_n)\|_2\to
\|\nabla u\|_2$ as $n\to+\infty$, that is, $u\in S(c)$ is a
minimizer of $\sigma(c)$ and $u_n(x+y_n)\to u$ in
$H^1(\mathbb{R}^N)$. The proof is complete.
\end{proof}

\textbf{Proof of Theorem \ref{thm1.1}}. In view of  Lemma
\ref{lem2.1}(1), let $\{u_n\}\subset S(c)$ be a sequence such that
$E_{q}(u_n)\to \sigma(c)$. Then by Lemma \ref{lem2.2}, there exist a
sequence of points $\{y_n\} \subset  \mathbb{R}^N$ and a function $u
\in  S(c)$ such that up to a subsequence $u_n(\cdot+y_n)\rightarrow
u$ in $H^1(\mathbb{R}^N)$. Thus $E_q(u)=\sigma(c)$.

Let $u_0$  denote the Schwartz rearrangement of $|u|$. By using the
Riesz rearrangement inequality (see \cite{Lieb-Loss 2001})
\begin{equation*}
\begin{split}
&\|\nabla u_0\|_2\leq \|\nabla |u|\|_2\leq \|\nabla u\|_2,\
\|u_0\|_r=\|u\|_r\
\text{for\ any\ }r\geq 1,\\
&\int_{\mathbb{R}^N}(I_\alpha\ast|u_0|^{\frac{N+\alpha}{N}})|u_0|^{\frac{N+\alpha}{N}}dx\geq
\int_{\mathbb{R}^N}(I_\alpha\ast|u|^{\frac{N+\alpha}{N}})|u|^{\frac{N+\alpha}{N}}dx,
\end{split}
\end{equation*}
we deduce that $E_q(u_0)=\sigma(c)$, that is, $\sigma (c)$ is
achieved by the real-valued positive and radially symmetric
non-increasing function $u_0\in S(c)$.

It is obvious that $u_0$ is a ground state to (\ref{e1.3}) and there
exists $\lambda_c\in \mathbb{R}$ such that $E_{q}'(u_0)+\lambda_c
u_0=0$. Then
\begin{equation*}
\begin{split}
\lambda_c c^2&=-\|\nabla
u_0\|_2^2+\gamma\int_{\mathbb{R}^N}(I_\alpha\ast|u_0|^{\frac{N+\alpha}{N}})
|u_0|^{\frac{N+\alpha}{N}}dx+\mu\|u_0\|_q^q\\
&=-2\sigma(c)+\frac{\gamma
\alpha}{N+\alpha}\int_{\mathbb{R}^N}(I_\alpha\ast|u_0|^{\frac{N+\alpha}{N}})
|u_0|^{\frac{N+\alpha}{N}}dx+\frac{\mu(q-2)}{q}\|u_0\|_q^q\\
&\geq -2\sigma(c),
\end{split}
\end{equation*}
which implies that $\lambda_c > \frac{\gamma
N}{N+\alpha}S_\alpha^{-\frac{N+\alpha}{N}}c^{\frac{2\alpha}{N}}$,
where we have used Lemma \ref{lem2.1}(2). The proof is complete.

\subsection{The case $q=2+\frac{4}{N}$}\ \ \ \\

\textbf{Proof of Theorem \ref{thm1.2}}. If $u$ is a solution to
(\ref{e1.3}), by the Pohozaev identity, $u$ satisfies
\begin{equation*}
Q_q(u):=\int_{\mathbb{R}^N}|\nabla
u|^2dx-\mu\eta_q\int_{\mathbb{R}^N}|u|^{q}dx=0,
\end{equation*}
see \cite{Yao-Chen22}. So we must find solutions to (\ref{e1.3}) in
the set
\begin{equation*}
\mathcal{M}_q(c):=\{u\in S(c):Q_q(u)=0\}.
\end{equation*}

For any $u\in \mathcal{M}_q(c)$, using the Gagliardo-Nirenberg
inequality (\ref{e1.1}), we obtain that
\begin{equation}\label{e2.2}
\begin{split}
\int_{\mathbb{R}^N}|\nabla
u|^2dx=\mu\eta_q\int_{\mathbb{R}^N}|u|^{q}dx&\leq \mu\eta_q\bar{S}\|\nabla u\|_2^{q\eta_q}\|u\|_2^{q(1-\eta_q)}\\
&=\frac{\mu N}{N+2}\bar{S}c^{4/N}\|\nabla u\|_2^{2}
\end{split}
\end{equation}
for $q=2+\frac{4}{N}$, which implies that
$\mathcal{M}_q(c)=\emptyset$ if $\frac{\mu N}{N+2}\bar{S}c^{4/N}<1$.
Hence, (\ref{e1.3}) does not admit a solution if $\frac{\mu
N}{N+2}\bar{S}c^{4/N}<1$.

If $\frac{\mu N}{N+2}\bar{S}c^{4/N}=1$, then in (\ref{e2.2}), the
equality holds in the Gagliardo-Nirenberg inequality. So by Lemma
\ref{lem3.1}, $u\in S(c)$ must be some scaling transformation of the
unique positive radial solution to the equation
\begin{equation*}
-\Delta u+u=|u|^{4/N}u.
\end{equation*}
Obviously, such $u$ is  not a solution to (\ref{e1.3}). Thus,
(\ref{e1.3}) does not have a solution in the  case $\frac{\mu
N}{N+2}\bar{S}c^{4/N}=1$.

\begin{remark}\label{rmk1.3}
Let $q=2+\frac{4}{N}$. By Lemma \ref{lem3.2}, for any $u\in
\mathcal{M}_q(c)$, we have
\begin{equation*}
\begin{split}
E_q(u)&=-\frac{\gamma
N}{2(N+\alpha)}\int_{\mathbb{R}^N}(I_\alpha\ast|u|^{\frac{N+\alpha}{N}})|u|^{\frac{N+\alpha}{N}}dx\\
&\geq  -\frac{\gamma
N}{2(N+\alpha)}S_\alpha^{-\frac{N+\alpha}{N}}c^{\frac{2(N+\alpha)}{N}}
\end{split}
\end{equation*}
and  the equality holds if and only if there exist $\delta, a>0$
such that
\begin{equation*}
u=a\left(\frac{\delta}{\delta^2+|x|^2}\right)^{N/2}\ \text{and}\
u\in \mathcal{M}_q(c),
\end{equation*}
which is equivalent to
\begin{equation}\label{e2.4}
u=c\left(\int_{\mathbb{R}^N}\frac{1}{(1+|x|^2)^N}dx\right)^{-1/2}
\left(\frac{\delta}{\delta^2+|x|^2}\right)^{N/2}
\end{equation}
and
\begin{equation}\label{e2.5}
\begin{split}
\frac{\mu
N}{N+2}c^{4/N}=\frac{\left\|\frac{1}{(1+|x|^2)^{N/2}}\right\|_2^{q(1-\eta_q)}
\left\|\nabla\frac{1}{(1+|x|^2)^{N/2}}\right\|_2^{q\eta_q}}{\left\|\frac{1}{(1+|x|^2)^{N/2}}\right\|_q^q}(>\bar{S}^{-1}).
\end{split}
\end{equation}
That is, under the assumption (\ref{e2.5}), $u$ defined in
(\ref{e2.4}) belongs to $\mathcal{M}_q(c)$ and is the unique
minimizer of $E_{q}|_{\mathcal{M}_q(c)}$. However, $u$ defined  in
(\ref{e2.4}) is not a solution to (\ref{e1.3}), so we can not obtain
a solution to (\ref{e1.3}) by minimizing $E_{q}|_{\mathcal{M}_q(c)}$
if $q=2+\frac{4}{N}$ and $c, \mu$ satisfy (\ref{e2.5}).
\end{remark}

\section{The non-autonomous problem}
\setcounter{section}{4} \setcounter{equation}{0}

To show the dependence on parameters, we rewrite
\begin{equation*}
\begin{split}
J_{\gamma}(u):&=\frac{1}{2}\int_{\mathbb{R}^N}|\nabla
u|^2dx-\frac{\mu}{q}\int_{\mathbb{R}^N}|u|^{q}dx\\
&\qquad-\frac{\gamma^2
N}{2(N+\alpha)}\int_{\mathbb{R}^N}(I_\alpha\ast|u|^{\frac{N+\alpha}{N}})|u|^{\frac{N+\alpha}{N}}dx
\end{split}
\end{equation*}
and
\begin{equation*}
\Upsilon_{\gamma,c}:=\inf_{u\in S(c)}J_{\gamma}(u).
\end{equation*}
It is obvious that $\mathcal{E}_{\epsilon}(u)\geq J_{h_{max}}(u)$
for any $u\in S(c)$. By Theorem \ref{thm1.1}, the definition
\begin{equation*}
\Gamma_{\epsilon,c}:=\inf_{u\in S(c)}\mathcal{E}_{\epsilon}(u)
\end{equation*}
is well defined and  $\Gamma_{\epsilon,c}\geq \Upsilon_{h_{max},c}$.

The next two lemmas  establish the relations of
$\Gamma_{\epsilon,c}$, $\Upsilon_{h_\infty,c}$ and
$\Upsilon_{h_{max},c}$.

\begin{lemma}\label{cor3.1}
Let $N\geq 1$, $\alpha\in (0,N)$, $\mu, c>0$, $2<q<2+\frac{4}{N}$,
and $0 <\gamma_1 < \gamma_2$. Then $\Upsilon_{\gamma_2,c} <
\Upsilon_{\gamma_1,c}$. In particular,
$\Upsilon_{h_{max},c}<\Upsilon_{h_\infty,c}<0$.
\end{lemma}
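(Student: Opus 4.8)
The plan is to exploit the strictly monotone dependence of $J_\gamma$ on the coupling parameter $\gamma$, combined with the attainment of the constrained infimum supplied by Theorem \ref{thm1.1}. First I would record that for every fixed $u\in S(c)$ the quantity $D(u):=\int_{\mathbb{R}^N}(I_\alpha\ast|u|^{\frac{N+\alpha}{N}})|u|^{\frac{N+\alpha}{N}}dx$ is strictly positive: since $c>0$ forces $u\not\equiv 0$ and the Riesz kernel $I_\alpha$ is positive, the double integral defining $D(u)$ cannot vanish. Consequently, for $0<\gamma_1<\gamma_2$ one has $\gamma_2^2>\gamma_1^2$ and
\[
J_{\gamma_2}(u)=J_{\gamma_1}(u)-\frac{(\gamma_2^2-\gamma_1^2)N}{2(N+\alpha)}\,D(u)<J_{\gamma_1}(u)\qquad\text{for every }u\in S(c).
\]

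The delicate point is that taking the infimum of this pointwise strict inequality over $S(c)$ would only yield the non-strict estimate $\Upsilon_{\gamma_2,c}\leq\Upsilon_{\gamma_1,c}$; recovering strictness is where attainment enters, and this is the only genuinely nontrivial step in the argument. I would observe that $J_\gamma$ is exactly the functional $E_q$ of \eqref{e1.10} with the coupling constant $\gamma$ replaced by $\gamma^2$. Because $\gamma_1^2>0$, Theorem \ref{thm1.1} applies to $J_{\gamma_1}$ and furnishes a minimizer $u_1\in S(c)$ with $J_{\gamma_1}(u_1)=\Upsilon_{\gamma_1,c}$. Evaluating the displayed strict inequality at this particular $u_1$ then gives
\[
\Upsilon_{\gamma_2,c}\leq J_{\gamma_2}(u_1)<J_{\gamma_1}(u_1)=\Upsilon_{\gamma_1,c},
\]
which is the first assertion.

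For the ``in particular'' statement, I would note that $(h_1)$ and $(h_2)$ yield $0<h_{inf}\leq h_\infty<h_{max}$, so in particular $0<h_\infty<h_{max}$, and applying the first part with $\gamma_1=h_\infty$, $\gamma_2=h_{max}$ gives $\Upsilon_{h_{max},c}<\Upsilon_{h_\infty,c}$. The strict upper bound $\Upsilon_{h_\infty,c}<0$ then follows from Lemma \ref{lem2.1}(2) (the strict estimate underlying Theorem \ref{thm1.1}) applied with coupling constant $h_\infty^2>0$, which gives
\[
\Upsilon_{h_\infty,c}<-\frac{h_\infty^2 N}{2(N+\alpha)}S_\alpha^{-\frac{N+\alpha}{N}}c^{\frac{2(N+\alpha)}{N}}<0,
\]
since $h_\infty>0$, $S_\alpha>0$ (Lemma \ref{lem3.2}) and $c>0$ make the right-hand side negative. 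Everything outside the attainment step is immediate, so I expect the proof to be short once Theorem \ref{thm1.1} is invoked.
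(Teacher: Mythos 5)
Your proposal is correct and follows essentially the same route as the paper: take a minimizer $u_1\in S(c)$ of $J_{\gamma_1}$ (existence via Theorem \ref{thm1.1}, since $J_\gamma$ is $E_q$ with coupling $\gamma^2$), use strict positivity of the Choquard term to get $\Upsilon_{\gamma_2,c}\leq J_{\gamma_2}(u_1)<J_{\gamma_1}(u_1)=\Upsilon_{\gamma_1,c}$. Your handling of the ``in particular'' statement, via $0<h_\infty<h_{max}$ and Lemma \ref{lem2.1}(2) with coupling $h_\infty^2$, simply fills in details the paper leaves implicit.
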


\begin{proof}
Let $u\in S(c)$ satisfy $J_{\gamma_1}(u)=\Upsilon_{\gamma_1,c}$.
Then, $\Upsilon_{\gamma_2,c}\leq J_{\gamma_2}(u)<J_{\gamma_1}(u) =
\Upsilon_{\gamma_1,c}$.
\end{proof}

\begin{lemma}\label{lem4.2}
Let $N\geq 1$, $\alpha\in (0,N)$, $\epsilon,\mu, c>0$,
$2<q<2+\frac{4}{N}$,  and $h$ satisfy ($h_1$)-($h_3$). Then
$$\limsup_{\epsilon\to 0^+}\Gamma_{\epsilon,c}\leq
\Upsilon_{h_{max},c}.$$
\end{lemma}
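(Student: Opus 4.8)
The plan is to bound $\Gamma_{\epsilon,c}$ from above by testing $\mathcal{E}_\epsilon$ against a single, $\epsilon$-independent competitor and then sending $\epsilon\to 0^+$. Since $h_{max}>0$, Theorem \ref{thm1.1} applied with the coupling constant $h_{max}^2$ in place of $\gamma$ (recall $J_\gamma$ carries the factor $\gamma^2$, so $J_{h_{max}}=E_q$ with $\gamma$ replaced by $h_{max}^2$) guarantees that $J_{h_{max}}$ attains its infimum on $S(c)$ at some $w\in S(c)$, i.e. $J_{h_{max}}(w)=\Upsilon_{h_{max},c}$. As $w\in S(c)$ is admissible for $\mathcal{E}_\epsilon$ as well, one has $\Gamma_{\epsilon,c}\le \mathcal{E}_\epsilon(w)$ for every $\epsilon>0$. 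Hence it suffices to prove
\begin{equation*}
\lim_{\epsilon\to 0^+}\mathcal{E}_\epsilon(w)=J_{h_{max}}(w),
\end{equation*}
for then $\limsup_{\epsilon\to0^+}\Gamma_{\epsilon,c}\le J_{h_{max}}(w)=\Upsilon_{h_{max},c}$.

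Comparing the definitions of $\mathcal{E}_\epsilon$ and $J_{h_{max}}$, the gradient term $\frac12\|\nabla w\|_2^2$ and the local term $\frac{\mu}{q}\|w\|_q^q$ are independent of $\epsilon$ and identical in both functionals. Thus the whole problem reduces to the convergence of the single nonlocal term
\begin{equation*}
T_\epsilon:=\int_{\mathbb{R}^N}\big(I_\alpha\ast[h(\epsilon x)|w|^{\frac{N+\alpha}{N}}]\big)h(\epsilon x)|w|^{\frac{N+\alpha}{N}}\,dx
\end{equation*}
to the corresponding nonlocal part of $J_{h_{max}}(w)$, namely $h_{max}^2\int_{\mathbb{R}^N}(I_\alpha\ast|w|^{\frac{N+\alpha}{N}})|w|^{\frac{N+\alpha}{N}}\,dx$.

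To establish this, I would write $T_\epsilon$ as the double integral
\begin{equation*}
T_\epsilon=A_\alpha(N)\int_{\mathbb{R}^N}\int_{\mathbb{R}^N}\frac{h(\epsilon x)|w(x)|^{\frac{N+\alpha}{N}}\,h(\epsilon y)|w(y)|^{\frac{N+\alpha}{N}}}{|x-y|^{N-\alpha}}\,dx\,dy
\end{equation*}
and pass to the limit by dominated convergence. For the pointwise limit, condition ($h_3$) gives $a_1=0\in h^{-1}(h_{max})$, so $h(0)=h_{max}$, and the continuity of $h$ in ($h_1$) yields $h(\epsilon x)\to h(0)=h_{max}$ for each fixed $x$ as $\epsilon\to 0^+$; hence the integrand converges a.e. to $h_{max}^2|w(x)|^{\frac{N+\alpha}{N}}|w(y)|^{\frac{N+\alpha}{N}}/|x-y|^{N-\alpha}$. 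For the domination, the bound $0<h\le h_{max}$ from ($h_1$) supplies the $\epsilon$-independent majorant $h_{max}^2|w(x)|^{\frac{N+\alpha}{N}}|w(y)|^{\frac{N+\alpha}{N}}/|x-y|^{N-\alpha}$, whose integrability over $\mathbb{R}^N\times\mathbb{R}^N$ follows from the Hardy--Littlewood--Sobolev inequality (Lemma \ref{lem HLS}) with $p=r=\frac{2N}{N+\alpha}$, since $|w|^{\frac{N+\alpha}{N}}\in L^{\frac{2N}{N+\alpha}}(\mathbb{R}^N)$ is equivalent to $w\in L^2(\mathbb{R}^N)$, which holds because $w\in S(c)$. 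The dominated convergence theorem then yields $T_\epsilon\to h_{max}^2\int(I_\alpha\ast|w|^{\frac{N+\alpha}{N}})|w|^{\frac{N+\alpha}{N}}$, completing the argument.

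I expect the only delicate point to be the justification of the limit in the nonlocal term: one must exhibit an $\epsilon$-independent integrable majorant for the double integral, which is exactly what the Hardy--Littlewood--Sobolev bound provides. A useful observation is that where $w$ concentrates is irrelevant here — because $\epsilon x\to 0$ for each fixed $x$, the pointwise limit of $h(\epsilon x)$ is always $h(0)=h_{max}$ — so no translation of the test function is needed, and ($h_3$) enters only through the single fact $h(0)=h_{max}$.
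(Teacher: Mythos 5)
Your proposal is correct and follows essentially the same route as the paper: test $\mathcal{E}_\epsilon$ against the fixed minimizer of $J_{h_{max}}$ on $S(c)$ furnished by Theorem \ref{thm1.1} and pass to the limit in the nonlocal term by dominated convergence, using $h(0)=h_{max}$ from ($h_3$). The only difference is that you spell out the details the paper leaves implicit (the double-integral majorant via the Hardy--Littlewood--Sobolev inequality with $p=r=\tfrac{2N}{N+\alpha}$), which is a welcome addition but not a different argument.
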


\begin{proof}
By Theorem \ref{thm1.1}, choose  $u \in  S(c)$ such that
$J_{h_{max}}(u)=\Upsilon_{h_{max},c}$. Then
\begin{equation*}
\begin{split}
\Gamma_{\epsilon,c}\leq
\mathcal{E}_{\epsilon}(u)&=\frac{1}{2}\int_{\mathbb{R}^N}|\nabla
u|^2dx-\frac{\mu}{q}\int_{\mathbb{R}^N}|u|^{q}dx\\
&\qquad-\frac{
N}{2(N+\alpha)}\int_{\mathbb{R}^N}(I_\alpha\ast [h(\epsilon x)|u|^{\frac{N+\alpha}{N}}])h(\epsilon x)|u|^{\frac{N+\alpha}{N}}dx.\\
\end{split}
\end{equation*}
Letting $\epsilon\to 0^+$, by the Lebesgue dominated convergence
theorem, we deduce that
\begin{equation*}
\limsup_{\epsilon\to 0^+}\Gamma_{\epsilon,c}\leq
\limsup_{\epsilon\to 0^+} \mathcal{E}_{\epsilon}(u)=J_{h(0)}(u)=
J_{h_{max}}(u)=\Upsilon_{h_{max},c},
\end{equation*}
which completes the proof.
\end{proof}

By Lemmas \ref{cor3.1} and \ref{lem4.2}, there exists $\epsilon_1>0$
such that $\Gamma_{\epsilon,c}<\Upsilon_{h_\infty,c}$ for all
$\epsilon\in (0,\epsilon_1)$. In the following, we always assume
that $\epsilon\in (0,\epsilon_1)$. The next two lemmas will be used
to prove the $(PS)$ condition for $\mathcal{E}_{\epsilon}$
restricted to $S(c)$ at some levels.

\begin{lemma}\label{lem4.1}
Assume that $N\geq 1$, $\alpha\in (0,N)$, $\epsilon, \mu, c>0$,
$2<q<2+\frac{4}{N}$, and $h$ satisfies ($h_1$)-($h_3$). Let
$\{u_n\}\subset S(c)$ be such that $\mathcal{E}_{\epsilon}(u_n)\to
a$ as $n\to+\infty$ with $a<\Upsilon_{h_\infty,c}$. If
$u_n\rightharpoonup u$ in $H^1(\mathbb{R}^N)$, then $u\not\equiv 0$.
\end{lemma}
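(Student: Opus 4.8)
The plan is to argue by contradiction: suppose $u\equiv 0$, so that $u_n\rightharpoonup 0$ in $H^1(\mathbb{R}^N)$, and then derive $a\geq \Upsilon_{h_\infty,c}$, contradicting the hypothesis $a<\Upsilon_{h_\infty,c}$. The guiding idea is that if $u_n$ has no nontrivial weak limit, its mass concentrates only near spatial infinity, where the weight $h(\epsilon x)$ is essentially $h_\infty$; consequently $\mathcal{E}_{\epsilon}(u_n)$ should be asymptotically bounded below by $J_{h_\infty}(u_n)\geq \Upsilon_{h_\infty,c}$.

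First I would record the compactness coming from $u_n\rightharpoonup 0$: by the Rellich--Kondrachov theorem, for every fixed radius $\rho$ one has $\int_{B_\rho(0)}|u_n|^2\,dx\to 0$. Next, using $(h_2)$, I fix $\delta>0$ and choose $R_\delta$ with $h(y)<h_\infty+\delta$ for $|y|>R_\delta$; setting $y=\epsilon x$ this reads $h(\epsilon x)<h_\infty+\delta$ whenever $|x|>R_\delta/\epsilon$, a radius that is fixed once $\epsilon$ and $\delta$ are fixed. Writing $w_n:=|u_n|^{(N+\alpha)/N}$, I would then use the pointwise bound $0\leq h(\epsilon x)w_n(x)\leq (h_\infty+\delta)w_n(x)+h_{max}\,w_n(x)\chi_{B_{R_\delta/\epsilon}(0)}(x)$, which holds because $h(\epsilon x)\leq h_\infty+\delta$ outside the ball and $h(\epsilon x)\leq h_{max}$ inside it.

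The key estimate compares the nonlocal terms of $\mathcal{E}_{\epsilon}$ and $J_{h_\infty}$. Since $I_\alpha>0$, the bilinear form $(f,g)\mapsto \int_{\mathbb{R}^N}(I_\alpha\ast f)g\,dx$ is positive and monotone on nonnegative functions, so the pointwise bound above yields
\[
\int_{\mathbb{R}^N}(I_\alpha\ast[h(\epsilon\cdot)w_n])\,h(\epsilon\cdot)w_n\,dx\leq \int_{\mathbb{R}^N}(I_\alpha\ast f_n)\,f_n\,dx,
\]
where $f_n:=(h_\infty+\delta)w_n+h_{max}\,w_n\chi_{B_{R_\delta/\epsilon}(0)}$. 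Expanding the right-hand side by bilinearity and applying the Hardy--Littlewood--Sobolev inequality (Lemma \ref{lem HLS}) to the cross and diagonal terms carrying the factor $w_n\chi_{B_{R_\delta/\epsilon}(0)}$, whose $L^{2N/(N+\alpha)}$-norm equals $(\int_{B_{R_\delta/\epsilon}(0)}|u_n|^2\,dx)^{(N+\alpha)/(2N)}\to 0$, leaves only $(h_\infty+\delta)^2\int_{\mathbb{R}^N}(I_\alpha\ast w_n)w_n\,dx$ plus an $o_n(1)$. Since $\int_{\mathbb{R}^N}(I_\alpha\ast w_n)w_n\,dx\leq C\|u_n\|_2^{2(N+\alpha)/N}$ stays bounded and $\delta>0$ is arbitrary, I obtain
\[
\limsup_{n\to\infty}\Big[\int_{\mathbb{R}^N}(I_\alpha\ast[h(\epsilon\cdot)w_n])h(\epsilon\cdot)w_n\,dx-h_\infty^2\int_{\mathbb{R}^N}(I_\alpha\ast w_n)w_n\,dx\Big]\leq 0.
\]
Writing $\mathcal{E}_{\epsilon}(u_n)=J_{h_\infty}(u_n)-\tfrac{N}{2(N+\alpha)}\big[\,\cdots\,\big]$ with the bracket as above, using $J_{h_\infty}(u_n)\geq \Upsilon_{h_\infty,c}$ for $u_n\in S(c)$, and taking $\liminf$ gives $a=\lim_n\mathcal{E}_{\epsilon}(u_n)\geq \Upsilon_{h_\infty,c}$, the desired contradiction.

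I expect the main obstacle to be the nonlocal term. Unlike the purely local $L^q$ term, which vanishes outright under $u_n\rightharpoonup 0$, the Choquard term need not vanish at the lower critical exponent $\frac{N+\alpha}{N}$ --- this is precisely the ``bubbling at infinity'' mentioned in the Introduction --- so it cannot simply be discarded. The delicate point is to split off the bounded region, where Rellich compactness annihilates the contribution, and to control the genuinely nonlocal cross interactions through Lemma \ref{lem HLS}, all while keeping the dependence on $\delta$ uniform in $n$ so that the $\limsup$ can be passed to the clean bound featuring $h_\infty^2$.
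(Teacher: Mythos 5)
Your proposal is correct and follows essentially the same route as the paper: argue by contradiction, compare $\mathcal{E}_{\epsilon}(u_n)$ with $J_{h_\infty}(u_n)\geq \Upsilon_{h_\infty,c}$, and control the discrepancy in the nonlocal terms via the Hardy--Littlewood--Sobolev inequality together with the splitting into the fixed ball $B_{R/\epsilon}(0)$ (where local compactness from $u_n\rightharpoonup 0$ kills the contribution) and its complement (where $h(\epsilon x)$ is $\delta$-close to $h_\infty$), then let $\delta\to 0$. The only cosmetic difference is that you compare the quadratic forms through a one-sided pointwise domination $h(\epsilon x)w_n\leq (h_\infty+\delta)w_n+h_{max}w_n\chi_{B_{R_\delta/\epsilon}(0)}$ and positivity of the Riesz kernel, whereas the paper expands the difference bilinearly with the signed weight $h_\infty-h(\epsilon x)$ and bounds it by the $L^{2N/(N+\alpha)}$-norm of $(h_\infty-h(\epsilon x))w_n$; both are equally valid implementations of the same estimate.
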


\begin{proof}
Assume by contradiction that $u\equiv0$. Then
\begin{equation*}
\begin{split}
a+o_n(1)=\mathcal{E}_{\epsilon}(u_n)=&J_{h_\infty}(u_n)+\frac{
N}{2(N+\alpha)}h_\infty^2\int_{\mathbb{R}^N}(I_\alpha\ast
|u_n|^{\frac{N+\alpha}{N}})|u_n|^{\frac{N+\alpha}{N}}dx\\
&-\frac{ N}{2(N+\alpha)}\int_{\mathbb{R}^N}(I_\alpha\ast [h(\epsilon
x)|u_n|^{\frac{N+\alpha}{N}}])h(\epsilon
x)|u_n|^{\frac{N+\alpha}{N}}dx.
\end{split}
\end{equation*}
By ($h_2$), for any given $\delta>0$, there exists $R>0$ such that
$|h(x)-h_{\infty}|\leq \delta$ for all $|x|\geq R$. This together
with Lemma \ref{lem HLS} gives that
\begin{equation*}
\begin{split}
&h_\infty^2\int_{\mathbb{R}^N}(I_\alpha\ast
|u_n|^{\frac{N+\alpha}{N}})|u_n|^{\frac{N+\alpha}{N}}dx
-\int_{\mathbb{R}^N}(I_\alpha\ast [h(\epsilon
x)|u_n|^{\frac{N+\alpha}{N}}])h(\epsilon
x)|u_n|^{\frac{N+\alpha}{N}}dx\\
&=\int_{\mathbb{R}^N}(I_\alpha\ast [(h_\infty-h(\epsilon
x))|u_n|^{\frac{N+\alpha}{N}}])h_\infty|u_n|^{\frac{N+\alpha}{N}}dx\\
&\qquad+\int_{\mathbb{R}^N}(I_\alpha\ast [h(\epsilon
x)|u_n|^{\frac{N+\alpha}{N}}])(h_\infty-h(\epsilon
x))|u_n|^{\frac{N+\alpha}{N}}dx\\
&=\int_{\mathbb{R}^N}(I_\alpha\ast
[h_\infty|u_n|^{\frac{N+\alpha}{N}}])(h_\infty-h(\epsilon
x))|u_n|^{\frac{N+\alpha}{N}}dx\\
&\qquad+\int_{\mathbb{R}^N}(I_\alpha\ast [h(\epsilon
x)|u_n|^{\frac{N+\alpha}{N}}])(h_\infty-h(\epsilon
x))|u_n|^{\frac{N+\alpha}{N}}dx\\
&\leq 2
C\|h_{max}|u_n|^{\frac{N+\alpha}{N}}\|_{\frac{2N}{N+\alpha}}\|(h_\infty-h(\epsilon
x))|u_n|^{\frac{N+\alpha}{N}}\|_{\frac{2N}{N+\alpha}}\\
\end{split}
\end{equation*}
and
\begin{equation*}
\begin{split}
&\|(h_\infty-h(\epsilon
x))|u_n|^{\frac{N+\alpha}{N}}\|_{\frac{2N}{N+\alpha}}\\
=&\left(\int_{\mathbb{R}^N}|h_\infty-h(\epsilon
x)|^{\frac{2N}{N+\alpha}}|u_n|^2dx\right)^{\frac{N+\alpha}{2N}}\\
=&\left(\int_{B_{R/\epsilon}(0)}|h_\infty-h(\epsilon
x)|^{\frac{2N}{N+\alpha}}|u_n|^2dx+\int_{B^c_{R/\epsilon}(0)}|h_\infty-h(\epsilon
x)|^{\frac{2N}{N+\alpha}}|u_n|^2dx\right)^{\frac{N+\alpha}{2N}}\\
\leq & \left(\int_{B_{R/\epsilon}(0)}|h_\infty-h(\epsilon
x)|^{\frac{2N}{N+\alpha}}|u_n|^2dx+\delta^{\frac{2N}{N+\alpha}}\int_{B^c_{R/\epsilon}(0)}|u_n|^2dx\right)^{\frac{N+\alpha}{2N}}.
\end{split}
\end{equation*}
Recalling that $\{u_n\}$ is bounded in $H^1(\mathbb{R}^N)$ and
$u_n\to  0$ in $L^t(B_{R/\epsilon}(0))$ for all $t\in[1,
\frac{2N}{(N-2)^+})$, we have
\begin{equation*}
a+o_n(1)=\mathcal{E}_{\epsilon}(u_n)\geq J_{h_\infty}(u_n)-\delta
C+o_n(1)
\end{equation*}
for some $C > 0$. Since $\delta > 0$ is arbitrary, we deduce that
$a\geq \Upsilon_{h_\infty,c}$, which is a contradiction. Thus,
$u\not\equiv 0$.
\end{proof}

\begin{lemma}\label{lem4.5}
Assume that $N\geq 1$, $\alpha\in (0,N)$, $\mu, c>0$,
$\epsilon\in(0,\epsilon_1)$, $2<q<2+\frac{4}{N}$, and $h$ satisfies
($h_1$)-($h_3$). Let $\{u_{n}\}$ be a $(PS)_a$ sequence of
$\mathcal{E}_{\epsilon}$ restricted to $S(c)$ with $a <
\Upsilon_{h_\infty,c}$ and let $u_n\rightharpoonup u_\epsilon$ in
$H^1(\mathbb{R}^N)$. If $u_n\not\rightarrow u_{\epsilon}$ in
$H^1(\mathbb{R}^N)$, then there exists $\beta>0$ independent of
$\epsilon\in (0,\epsilon_1)$ such that
\begin{equation*}
\limsup_{n\to +\infty}\|u_n-u_{\epsilon}\|_2\geq \beta.
\end{equation*}
\end{lemma}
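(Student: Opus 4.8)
The plan is to set $v_n := u_n - u_\epsilon$, so that $v_n \rightharpoonup 0$ in $H^1(\mathbb{R}^N)$, and to show that a nonvanishing residual must carry a fixed amount of $L^2$-mass. First I would record the uniform quantitative data that make $\beta$ independent of $\epsilon$. Since $2<q<2+\frac4N$, the functional $\mathcal{E}_\epsilon$ is coercive on $S(c)$ uniformly in $\epsilon$ (the Gagliardo--Nirenberg exponent satisfies $q\eta_q<2$, and the nonlocal term is controlled by $h_{max}^2 S_\alpha^{-\frac{N+\alpha}{N}}\|u\|_2^{\frac{2(N+\alpha)}{N}}$ via Lemma~\ref{lem HLS} and (\ref{e1.2})); hence every $(PS)_a$ sequence with $\Upsilon_{h_{max},c}\le a<\Upsilon_{h_\infty,c}$ is bounded in $H^1$ by a constant $\Lambda$ depending only on the fixed data. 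The $(PS)$ condition on $S(c)$ yields Lagrange multipliers $\lambda_n$ with $\mathcal{E}_\epsilon'(u_n)+\lambda_n u_n\to 0$ in $H^{-1}$; testing with $u_n$ and using the energy bound shows $\{\lambda_n\}$ is bounded and, as in the proof of Theorem~\ref{thm1.1}(ii), bounded below by a positive constant $\lambda_0$ independent of $\epsilon$. Passing to the limit, $u_\epsilon$ solves the equation in (\ref{e1.7}) with multiplier $\lambda_\epsilon\ge\lambda_0$.

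Next I would derive the residual identity. Expanding the local term by the Br\'ezis--Lieb lemma and the nonlocal term by Lemma~\ref{lem3.4} in its derivative form, the weak convergence $v_n\rightharpoonup0$ kills the cross terms; testing $\mathcal{E}_\epsilon'(u_n)+\lambda_n u_n\to0$ against $v_n$ and subtracting the corresponding identity for $u_\epsilon$ then gives
$$\|\nabla v_n\|_2^2+\lambda_n\|v_n\|_2^2=\mu\|v_n\|_q^q+\int_{\mathbb{R}^N}\big(I_\alpha\ast[h(\epsilon x)|v_n|^{\frac{N+\alpha}{N}}]\big)h(\epsilon x)|v_n|^{\frac{N+\alpha}{N}}dx+o_n(1).$$
Let $B_\epsilon(v_n)$ denote the nonlocal integral, and pass to a subsequence so that $\|v_n\|_2\to m$ and $\|\nabla v_n\|_2\to g$; since $u_n\not\to u_\epsilon$ in $H^1$, $(m,g)\neq(0,0)$. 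I would then split according to Lions' lemma. If $\|v_n\|_q\to0$ (in particular under vanishing), dropping $\|\nabla v_n\|_2^2\ge0$ from the identity and using $B_\epsilon(v_n)\le h_{max}^2 S_\alpha^{-\frac{N+\alpha}{N}}\|v_n\|_2^{\frac{2(N+\alpha)}{N}}$ yields $\lambda_0 m^2\le h_{max}^2 S_\alpha^{-\frac{N+\alpha}{N}}m^{\frac{2(N+\alpha)}{N}}$; here $m>0$, since $m=0$ would force $g=0$, a contradiction, so
$$m\ge\Big(\lambda_0 S_\alpha^{\frac{N+\alpha}{N}}h_{max}^{-2}\Big)^{\frac{N}{2\alpha}}=:\beta_1,$$
a bound free of $\epsilon$. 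If instead $\|v_n\|_q\not\to0$, Lions' lemma gives $y_n$ with $\int_{B_R(y_n)}|v_n|^2\ge\delta>0$, and $v_n\rightharpoonup0$ forces $|y_n|\to\infty$; then $v_n(\cdot+y_n)\rightharpoonup w\neq0$, and by ($h_2$) the translated weight $h(\epsilon(\cdot+y_n))\to h_\infty$, so $w$ solves the autonomous problem with constant coefficient $h_\infty$ and multiplier $\lambda_\epsilon\ge\lambda_0$. Testing this limit equation with $w$ and bounding the nonlocal and $L^q$ terms by (\ref{e1.1})--(\ref{e1.2}) with the $\epsilon$-free constants produces a uniform lower bound $\|w\|_2\ge\beta_2$, whence $\limsup_n\|v_n\|_2\ge\|w\|_2\ge\beta_2$. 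Taking $\beta:=\min\{\beta_1,\beta_2\}$ finishes the argument.

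The main obstacle is precisely the uniformity in $\epsilon$: one must secure the $\epsilon$-independent lower bound $\lambda_0$ for the multipliers and, in the nonvanishing case, an $\epsilon$-independent mass bound for the escaping bubble. Both hinge on the fact that the only constants entering the Hardy--Littlewood--Sobolev and Gagliardo--Nirenberg estimates are $h_{max}$, $S_\alpha$ and $\bar S$, together with the fixed energy window $\Upsilon_{h_{max},c}\le a<\Upsilon_{h_\infty,c}<0$. The ``bubbling at infinity'' of the lower critical nonlocal term is tamed uniformly because $h(\epsilon x)\le h_{max}$ pointwise, so the weighted nonlocal term is dominated by $h_{max}^2 S_\alpha^{-\frac{N+\alpha}{N}}\|\cdot\|_2^{\frac{2(N+\alpha)}{N}}$ regardless of $\epsilon$, which is exactly what converts the residual identity into the explicit lower bound $\beta_1$ above.
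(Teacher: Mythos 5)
Your proposal is correct in substance, and its first half is essentially the paper's: you both extract Lagrange multipliers $\lambda_n\to\lambda_\epsilon$ via (\cite{Willem 1996}, Proposition 5.12), obtain the $\epsilon$-independent positive lower bound $\lambda_\epsilon\geq -2\Upsilon_{h_\infty,c}/c^2>0$ (the paper's (\ref{e4.11})) by the same energy--derivative computation as in Theorem \ref{thm1.1}(ii), and derive the residual identity for $v_n=u_n-u_\epsilon$ (the paper's (\ref{e4.15})). Where you genuinely diverge is the endgame. The paper never invokes Lions' lemma or an escaping bubble: from (\ref{e4.15}) and the multiplier bound it gets (\ref{e4.3}), bounds the left side below by $\min\{1,-2\Upsilon_{h_\infty,c}/c^2\}\|v_n\|^2$ and the right side by $h_{max}^2S_\alpha^{-\frac{N+\alpha}{N}}\|v_n\|^{\frac{2(N+\alpha)}{N}}+\mu\bar S\|v_n\|^q$ in the full $H^1$ norm, concluding $\|v_n\|\geq C$ with $C$ independent of $\epsilon$; it then feeds this back into (\ref{e4.3}) and uses that both nonlinear terms are controlled by powers of $\|v_n\|_2$ alone, the gradient factors being uniformly bounded by (\ref{e4.2}), to get the $L^2$ lower bound. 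Your route instead splits via the Lions dichotomy: your vanishing case is a cleaner version of the same algebra and yields the explicit constant $\beta_1$, but your nonvanishing case is substantially heavier, since it requires passing to the limit in the translated equation (a.e.\ and weak $L^{\frac{2N}{N+\alpha}}$ convergence of $h(\epsilon(\cdot+y_n))|v_n(\cdot+y_n)|^{\frac{N+\alpha}{N}}$, locally uniform convergence of the weight to $h_\infty$), machinery the paper avoids entirely. Both you and the paper implicitly rely on a derivative-form Br\'ezis--Lieb splitting for the nonlocal term, so that is not a distinguishing issue.

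One step of your nonvanishing case does need repair as written. After testing the limit equation with $w$, you cannot simply bound the Gagliardo--Nirenberg gradient factor by the uniform constant and drop $\|\nabla w\|_2^2$ from the left: setting $t=\|w\|_2$, that would give $\lambda_0t^2\leq C_1t^{2+\frac{2\alpha}{N}}+C_2t^{q(1-\eta_q)}$, and since $q(1-\eta_q)\leq 2$ whenever $N\geq 2$, this inequality holds for \emph{all} small $t$ and yields no lower bound. You must keep $\|\nabla w\|_2^2$ on the left and absorb the term $\mu\bar S\,t^{q(1-\eta_q)}\|\nabla w\|_2^{q\eta_q}$ by Young's inequality (possible because $q\eta_q<2$); the complementary power $\frac{2q(1-\eta_q)}{2-q\eta_q}$ then exceeds $2$, and $\lambda_0t^2\leq C_1t^{2+\frac{2\alpha}{N}}+Ct^{2+\kappa}$ with $\kappa>0$ forces $t\geq\beta_2>0$ with $\beta_2$ independent of $\epsilon$. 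With that absorption spelled out, your argument is complete; it is simply longer than the paper's purely algebraic path.
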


\begin{proof}
Setting the functional $\Psi: H^1(\mathbb{R}^N)\to \mathbb{R}$ given
by
\begin{equation*}
\Psi(u)=\frac{1}{2}\int_{\mathbb{R}^N}|u|^2dx,
\end{equation*}
it follows that $S(c) =\Psi^{-1}(c^2/2)$. Then, by Willem
(\cite{Willem 1996}, Proposition 5.12), there exists
$\{\lambda_n\}\subset \mathbb{R}$ such that
\begin{equation}\label{e4.8}
\|\mathcal{E}'_{\epsilon}(u_n)+\lambda_n\Psi'(u_n)\|_{H^{-1}(\mathbb{R}^N)}\to
0\ \text{as}\ n\to+\infty.
\end{equation}

By the boundedness of $\{u_n\}$ in $H^1(\mathbb{R}^N)$, we know
$\{\lambda_n\}$ is bounded and thus, up to a  subsequence, there
exists $\lambda_{\epsilon}$ such that $\lambda_n\to
\lambda_{\epsilon}$ as $n\to+\infty$. This together with
(\ref{e4.8}) leads to
\begin{equation}\label{e4.10}
\mathcal{E}'_{\epsilon}(u_\epsilon)+\lambda_\epsilon\Psi'(u_\epsilon)=0\text{\
in \ }H^{-1}(\mathbb{R}^N)
\end{equation}
and then
\begin{equation}\label{e4.9}
\|\mathcal{E}'_{\epsilon}(v_n)+\lambda_\epsilon\Psi'(v_n)\|_{H^{-1}(\mathbb{R}^N)}\to
0\ \text{as}\ n\to+\infty,
\end{equation}
where $v_n:=u_n-u_\epsilon$. By direct calculations, we get that
\begin{equation*}
\begin{split}
\Upsilon_{h_\infty,c}&>\lim_{n\to+\infty}\mathcal{E}_{\epsilon}(u_n)\\
&=\lim_{n\to+\infty}\left(\mathcal{E}_{\epsilon}(u_n)-\frac{1}{2}\mathcal{E}_{\epsilon}'(u_n)u_n-\frac{1}{2}\lambda_n\|u_n\|_2^2+o_n(1)\right)\\
&=\lim_{n\to+\infty}\left[ \left(\frac{1}{2}-\frac{
N}{2(N+\alpha)}\right)\int_{\mathbb{R}^N}(I_\alpha\ast [h(\epsilon
x)|u_n|^{\frac{N+\alpha}{N}}])h(\epsilon
x)|u_n|^{\frac{N+\alpha}{N}}dx  \right.\\
&\qquad\qquad\qquad\left.+\left(\frac{1}{2}-\frac{1}{q}\right)\mu\int_{\mathbb{R}^N}|u_n|^qdx-\frac{1}{2}\lambda_nc^2+o_n(1)\right]\\
&\geq -\frac{1}{2}\lambda_{\epsilon}c^2,
\end{split}
\end{equation*}
which implies that
\begin{equation}\label{e4.11}
\lambda_{\epsilon}\geq -\frac{2\Upsilon_{h_\infty,c}}{c^2}>0\
\text{for\ all\ }\epsilon\in (0,\epsilon_1).
\end{equation}
By (\ref{e4.9}), we know
\begin{equation}\label{e4.15}
\begin{split}
&\int_{\mathbb{R}^N}|\nabla
v_n|^2dx+\lambda_{\epsilon}\int_{\mathbb{R}^N}|v_n|^2dx-\mu\int_{\mathbb{R}^N}|v_n|^{q}dx\\
& \qquad -\int_{\mathbb{R}^N}(I_\alpha\ast [h(\epsilon
x)|v_n|^{\frac{N+\alpha}{N}}])h(\epsilon
x)|v_n|^{\frac{N+\alpha}{N}}dx=o_n(1),
\end{split}
\end{equation}
which combined with (\ref{e4.11}) gives that
\begin{equation}\label{e4.3}
\begin{split}
\int_{\mathbb{R}^N}|\nabla
v_n|^2dx&-\frac{2\Upsilon_{h_\infty,c}}{c^2}\int_{\mathbb{R}^N}|v_n|^2dx\\
&\leq h_{max}^2\int_{\mathbb{R}^N}(I_\alpha\ast
|v_n|^{\frac{N+\alpha}{N}})|v_n|^{\frac{N+\alpha}{N}}dx+\mu\int_{\mathbb{R}^N}|v_n|^{q}dx+o_n(1).
\end{split}
\end{equation}

If $u_n\not\rightarrow u_{\epsilon}$ in $H^1(\mathbb{R}^N)$, that
is, $v_n\not\rightarrow 0$ in $H^1(\mathbb{R}^N)$, by (\ref{e1.1}),
(\ref{e1.2}) and (\ref{e4.3}), we deduce that
\begin{equation*}
\begin{split}
\int_{\mathbb{R}^N}|\nabla
v_n|^2dx&-\frac{2\Upsilon_{h_\infty,c}}{c^2}\int_{\mathbb{R}^N}|v_n|^2dx\\
&\leq
h_{max}^2S_\alpha^{-\frac{N+\alpha}{N}}\|v_n\|^{\frac{2(N+\alpha)}{N}}+\mu
\bar{S}\|v_n\|^q+o_n(1).
\end{split}
\end{equation*}
So there exists $C>0$ independent of $\epsilon$ such that
$\|v_n\|\geq C$ and then by (\ref{e4.3})
\begin{equation}\label{e4.4}
\limsup_{n\to
+\infty}\left(h_{max}^2\int_{\mathbb{R}^N}(I_\alpha\ast
|v_n|^{\frac{N+\alpha}{N}})|v_n|^{\frac{N+\alpha}{N}}dx+\mu\int_{\mathbb{R}^N}|v_n|^{q}dx\right)\geq
C.
\end{equation}
Noting that
\begin{equation}\label{e4.2}
\begin{split}
\Upsilon_{h_\infty,c}>\lim_{n\to+\infty}\mathcal{E}_{\epsilon}(u_n)&\geq
 \frac{1}{2}\|\nabla u_n\|_2^2-\frac{h_{max}^2
N}{2(N+\alpha)}S_\alpha^{-\frac{N+\alpha}{N}}c^{\frac{2(N+\alpha)}{N}}\\
&\qquad-\frac{\mu}{q}\bar{S}c^{q(1-\eta_q)}\|\nabla
u_n\|_2^{q\eta_q},
\end{split}
\end{equation}
we obtain that $\{u_n\}$ is uniformly bounded in $H^1(\mathbb{R}^N)$
for any $\epsilon\in(0,\epsilon_1)$. This together with
(\ref{e1.1}), (\ref{e1.2}) and (\ref{e4.4}) gives that there exists
$\beta
> 0$ independent of $\epsilon\in (0,\epsilon_1)$ such that
\begin{equation}\label{e4.5}
\limsup_{n\to +\infty}\|v_n\|_2\geq \beta.
\end{equation}
The proof is complete.
\end{proof}

Now we give the compactness lemma.

\begin{lemma}\label{lem4.3}
Let $N\geq 1$, $\alpha\in (0,N)$, $\mu, c>0$,
$\epsilon\in(0,\epsilon_1)$, $2<q<2+\frac{4}{N}$, $h$ satisfy
($h_1$)-($h_3$), $\beta$ be obtained in Lemma \ref{lem4.5},
$$\rho_0:=\min\left\{\Upsilon_{h_\infty,c}
-\Upsilon_{h_{max},c},\frac{\beta^2}{c^2}(\Upsilon_{h_\infty,c}-\Upsilon_{h_{max},c})\right\}.$$
Then $\mathcal{E}_{\epsilon}$ satisfies the $(PS)_a$ condition
restricted to $S(c)$ if $a <\Upsilon_{h_{max},c}+\rho_0$.
\end{lemma}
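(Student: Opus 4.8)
The plan is to show that any $(PS)_a$ sequence $\{u_n\}\subset S(c)$ for $\mathcal{E}_\epsilon$ with $a<\Upsilon_{h_{max},c}+\rho_0$ has a strongly convergent subsequence in $H^1(\mathbb{R}^N)$. First I would extract a weak limit: by (\ref{e4.2}) the sequence $\{u_n\}$ is bounded in $H^1(\mathbb{R}^N)$, so up to a subsequence $u_n\rightharpoonup u_\epsilon$. Since $a<\Upsilon_{h_{max},c}+\rho_0\leq \Upsilon_{h_\infty,c}$ (because $\rho_0\leq \Upsilon_{h_\infty,c}-\Upsilon_{h_{max},c}$), Lemma \ref{lem4.1} applies and gives $u_\epsilon\not\equiv 0$. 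Next, as in the proof of Lemma \ref{lem4.5}, the constrained $(PS)$ condition produces multipliers $\lambda_n\to\lambda_\epsilon$ with $\mathcal{E}'_\epsilon(u_\epsilon)+\lambda_\epsilon\Psi'(u_\epsilon)=0$, and the splitting (\ref{e4.9}) holds for $v_n:=u_n-u_\epsilon$.

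The heart of the argument is to rule out $v_n\not\rightarrow 0$ by a strict energy inequality. I would set $b:=\|u_\epsilon\|_2\in(0,c]$ and, using the Brezis--Lieb splitting for the $L^q$ and $L^2$ norms together with the nonlocal Brezis--Lieb Lemma \ref{lem3.4}, decompose the energy as
\begin{equation*}
\mathcal{E}_\epsilon(u_n)=\mathcal{E}_\epsilon(u_\epsilon)+\widetilde{\mathcal{E}}(v_n)+o_n(1),
\end{equation*}
where $\widetilde{\mathcal{E}}$ is the ``tail'' energy; since $v_n\rightharpoonup 0$, the local term $h(\epsilon x)$ in the convolution can be replaced by $h_\infty$ up to $o_n(1)$ (by the same cut-off argument as in Lemma \ref{lem4.1}), so the tail energy is controlled below by $J_{h_\infty}$ evaluated on $v_n$. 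Using that $u_\epsilon$ is an admissible competitor of mass $b$ for $\mathcal{E}_\epsilon$ and that $v_n$ carries mass converging to some $d$ with $d^2=c^2-b^2$, I would obtain
\begin{equation*}
a\geq \Gamma_{\epsilon,b}+\liminf_{n\to+\infty}J_{h_\infty}(v_n)\geq \Upsilon_{h_{max},b}+\Upsilon_{h_\infty,d},
\end{equation*}
where I use $\mathcal{E}_\epsilon(u_\epsilon)\geq \Gamma_{\epsilon,b}\geq \Upsilon_{h_{max},b}$ and the lower bound on the tail.

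The main obstacle is turning this into a contradiction with the $\rho_0$-gap. I would invoke the scaling monotonicity of Lemma \ref{lem2.1}(3) (which holds verbatim for $J_\gamma$, hence for $\Upsilon_{\gamma,\cdot}$) to get $\Upsilon_{h_{max},b}\geq \tfrac{b^2}{c^2}\Upsilon_{h_{max},c}$ and $\Upsilon_{h_\infty,d}\geq \tfrac{d^2}{c^2}\Upsilon_{h_\infty,c}$. If $v_n\not\to 0$, Lemma \ref{lem4.5} forces $\limsup_n\|v_n\|_2\geq\beta$, so $d\geq\beta$ and hence $d^2/c^2\geq \beta^2/c^2$; combining,
\begin{equation*}
a\geq \frac{b^2}{c^2}\Upsilon_{h_{max},c}+\frac{d^2}{c^2}\Upsilon_{h_\infty,c}
=\Upsilon_{h_{max},c}+\frac{d^2}{c^2}\bigl(\Upsilon_{h_\infty,c}-\Upsilon_{h_{max},c}\bigr)
\geq \Upsilon_{h_{max},c}+\rho_0,
\end{equation*}
using $b^2+d^2=c^2$ and $d^2/c^2\geq\beta^2/c^2$, which contradicts $a<\Upsilon_{h_{max},c}+\rho_0$. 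Therefore $v_n\to 0$ in $L^2$, and feeding this back through (\ref{e4.15}) with $\lambda_\epsilon>0$ (from (\ref{e4.11})) upgrades $L^2$-convergence to $H^1$-convergence, so $u_n\to u_\epsilon$ strongly. The delicate point to handle carefully is that the two gap terms in $\rho_0$ correspond precisely to the two cases $b<c$ (mass escaping, governed by $\Upsilon_{h_\infty,c}-\Upsilon_{h_{max},c}$) and the quantitative lower bound $d\geq\beta$ (governed by $\tfrac{\beta^2}{c^2}(\Upsilon_{h_\infty,c}-\Upsilon_{h_{max},c})$); verifying that the minimum in the definition of $\rho_0$ covers both regimes uniformly in $\epsilon\in(0,\epsilon_1)$ is where I expect the bookkeeping to require the most care.
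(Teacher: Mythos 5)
Your proposal is correct and follows essentially the same route as the paper: boundedness via (\ref{e4.2}), nontrivial weak limit via Lemma \ref{lem4.1}, the energy splitting $\mathcal{E}_\epsilon(u_n)=\mathcal{E}_\epsilon(v_n)+\mathcal{E}_\epsilon(u_\epsilon)+o_n(1)$, the cut-off comparison with $J_{h_\infty}$, and the scaling inequality of Lemma \ref{lem2.1}(3) combined with $d\geq\beta$ from Lemma \ref{lem4.5} to contradict $a<\Upsilon_{h_{max},c}+\rho_0$. The only (harmless) difference is your final step upgrading $L^2$- to $H^1$-convergence via (\ref{e4.15}): it is superfluous, since the contradiction hypothesis was already ``$u_n\not\to u_\epsilon$ in $H^1$'', so the contradiction directly yields strong $H^1$-convergence.
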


\begin{proof}
Let $\{u_{n}\}\subset S(c)$ be a $(PS)_a$ sequence of
$\mathcal{E}_{\epsilon}$ restricted to $S(c)$. It follows from
$a<\Upsilon_{h_\infty,c}$ and (\ref{e4.2}) that  $\{u_n\}$ is
bounded in $H^1(\mathbb{R}^N)$. Let $u_n\rightharpoonup u_\epsilon$
in $H^1(\mathbb{R}^N)$. By Lemma \ref{lem4.1}, $u_\epsilon\not\equiv
0$. Set $v_n:=u_n-u_{\epsilon}$. If $u_n\to u_\epsilon$ in
$H^1(\mathbb{R}^N)$, the proof is complete. If $u_n\not\rightarrow
u_{\epsilon}$ in $H^1(\mathbb{R}^N)$ for some $\epsilon\in
(0,\epsilon_1)$, by Lemma \ref{lem4.5},
\begin{equation*}
\limsup_{n\to +\infty}\|v_n\|_2\geq \beta.
\end{equation*}
Set $b=\|u_{\epsilon}\|_2,\ d_n = \|v_n\|_2$ and suppose that
$\|v_n\|_2\to d$, then  we get $d\geq \beta>0$ and  $c^2 = b^2 +
d^2$. From $d_n\in(0, c)$ for $n$ large enough, we have
\begin{equation}\label{e4.6}
a+o_n(1)=\mathcal{E}_{\epsilon}(u_n)=
\mathcal{E}_{\epsilon}(v_n)+\mathcal{E}_{\epsilon}(u_{\epsilon})+o_n(1).
\end{equation}
Since $v_n\rightharpoonup 0$ in $H^1(\mathbb{R}^N)$, similarly to
the proof of Lemma \ref{lem4.1}, we deduce that
\begin{equation}\label{e4.7}
\mathcal{E}_{\epsilon}(v_n)\geq J_{h_\infty}(v_n)-\delta C+o_n(1)
\end{equation}
for any $\delta>0$, where $C>0$ is a constant independent of
$\delta$ and $n$. By (\ref{e4.6}), (\ref{e4.7}) and Lemma
\ref{lem2.1}(3), we obtain that
\begin{equation*}
\begin{split}
a+o_n(1)=\mathcal{E}_{\epsilon}(u_n)&\geq
J_{h_\infty}(v_n)+\mathcal{E}_{\epsilon}(u_{\epsilon})-\delta C+o_n(1)\\
&\geq \Upsilon_{h_\infty,d_n}+\Upsilon_{h_{max},b}-\delta C+o_n(1)\\
&\geq
\frac{d_n^2}{c^2}\Upsilon_{h_\infty,c}+\frac{b^2}{c^2}\Upsilon_{h_{max},c}-\delta
C+o_n(1).
\end{split}
\end{equation*}
Letting $n\to +\infty$, by the arbitrariness of $\delta>0$, we
obtain that
\begin{equation*}
\begin{split}
a&\geq
\frac{d^2}{c^2}\Upsilon_{h_\infty,c}+\frac{b^2}{c^2}\Upsilon_{h_{max},c}\\
&=\Upsilon_{h_{max},c}+\frac{d^2}{c^2}(\Upsilon_{h_\infty,c}-\Upsilon_{h_{max},c})\\
&\geq
\Upsilon_{h_{max},c}+\frac{\beta^2}{c^2}(\Upsilon_{h_\infty,c}-\Upsilon_{h_{max},c}),
\end{split}
\end{equation*}
which contradicts $a<\Upsilon_{h_{max},c}+\rho_0$. Thus, we must
have $u_n \to u_{\epsilon}$ in $H^1(\mathbb{R}^N)$.
\end{proof}

In what follows, we fix $\tilde{\rho}, \tilde{r} > 0$ satisfying:

\smallskip

$\bullet$\ $\overline{B_{\tilde{\rho}}(a_i)}\cap
\overline{B_{\tilde{\rho}}(a_j)}=\emptyset$ for $i\neq j$ and
$i,j\in\{1,\cdots,l\}$;

\smallskip

$\bullet$\ $\cup_{i=1}^{l}B_{\tilde{\rho}}(a_i) \subset
B_{\tilde{r}}(0)$;

\smallskip

$\bullet$\
$K_{\frac{\tilde{\rho}}{2}}=\cup_{i=1}^{l}\overline{B_{\frac{\tilde{\rho}}{2}}(a_i)}$.

We also set the function $Q_\epsilon:
H^1(\mathbb{R}^N)\backslash\{0\}\to \mathbb{R}^N$ by
\begin{equation*}
Q_\epsilon(u):=\frac{\int_{\mathbb{R}^N}\chi(\epsilon
x)|u|^2dx}{\int_{\mathbb{R}^N}|u|^2dx},
\end{equation*}
where $\chi:\mathbb{R}^N\to \mathbb{R}^N$ is given by
\begin{equation*}
\chi(x):=\left\{
\begin{array}{ll}
x, &\ \text{if}\ |x|\leq \tilde{r},\\
\tilde{r}\frac{x}{|x|}, &\ \text{if}\ |x|>\tilde{r}.
\end{array}
\right.
\end{equation*}

The next two lemmas will be useful to get important ($PS$) sequences
for $\mathcal{E}_{\epsilon}$ restricted to $S(c)$.

\begin{lemma}\label{lem4.6}
Let $N\geq 1$, $\alpha\in (0,N)$, $\epsilon, \mu, c>0$,
$2<q<2+\frac{4}{N}$, $h$ satisfy ($h_1$)-($h_3$), and $\rho_0$ be
defined in Lemma \ref{lem4.3}. Then there exist
$\epsilon_2\in(0,\epsilon_1], \rho_1\in(0,\rho_0]$ such that if
$\epsilon\in(0, \epsilon_2)$, $u\in S(c)$ and
$\mathcal{E}_{\epsilon}(u)\leq \Upsilon_{h_{max},c}+\rho_1$, then
\begin{equation*}
Q_\epsilon(u)\in  K_{\frac{\tilde{\rho}}{2}}.
\end{equation*}
\end{lemma}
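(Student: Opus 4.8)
The plan is to argue by contradiction. If the conclusion fails, then for every choice of the two thresholds there is a violating pair, so taking both thresholds equal to $1/n$ produces sequences $\epsilon_n\to 0^+$ and $u_n\in S(c)$ with $\mathcal{E}_{\epsilon_n}(u_n)\le \Upsilon_{h_{max},c}+\tfrac1n$ but $Q_{\epsilon_n}(u_n)\notin K_{\tilde\rho/2}$. Since $h(\epsilon x)\le h_{max}$ and $I_\alpha$ has a positive kernel, one has $\mathcal{E}_{\epsilon}(u)\ge J_{h_{max}}(u)\ge \Upsilon_{h_{max},c}$ on $S(c)$; hence $\mathcal{E}_{\epsilon_n}(u_n)\to \Upsilon_{h_{max},c}$, which forces $J_{h_{max}}(u_n)\to \Upsilon_{h_{max},c}$ as well. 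Thus $\{u_n\}$ is a minimizing sequence for $J_{h_{max}}$ restricted to $S(c)$.

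Next I would invoke the compactness already established for autonomous functionals of this form: Lemma \ref{lem2.2} applies verbatim to $J_{h_{max}}$ (it is the functional $E_q$ with the coupling constant renamed), so up to a subsequence there are translations $\{y_n\}\subset\mathbb{R}^N$ and a minimizer $w\in S(c)$ of $J_{h_{max}}$ with $u_n(\cdot+y_n)\to w$ strongly in $H^1(\mathbb{R}^N)$; in particular $\|w\|_2^2=c^2$.

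The crux is to locate $\epsilon_n y_n$. Passing to a further subsequence, either $\epsilon_n y_n\to \xi$ for some $\xi\in\mathbb{R}^N$, or $|\epsilon_n y_n|\to+\infty$. After the change of variable $x=z+y_n$, for each fixed $z$ one has $\epsilon_n(z+y_n)\to\xi$ in the first case and $|\epsilon_n(z+y_n)|\to+\infty$ in the second. Using $h\in C(\mathbb{R}^N,\mathbb{R})$, hypothesis ($h_2$), the boundedness of $h$, Lemma \ref{lem HLS} and dominated convergence (exactly as in the estimate of Lemma \ref{lem4.1}), the $h$-weighted Choquard term converges, so that $\mathcal{E}_{\epsilon_n}(u_n)\to J_{h(\xi)}(w)$ in the first case and $\mathcal{E}_{\epsilon_n}(u_n)\to J_{h_\infty}(w)$ in the second. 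In both cases the limit equals $\Upsilon_{h_{max},c}$. Since $J_{h(\xi)}(w)\ge \Upsilon_{h(\xi),c}$ and $J_{h_\infty}(w)\ge \Upsilon_{h_\infty,c}$, and since $\Upsilon_{\gamma,c}$ is strictly decreasing in $\gamma$ by Lemma \ref{cor3.1}, the escaping alternative (where $h_\infty<h_{max}$) gives $\Upsilon_{h_\infty,c}>\Upsilon_{h_{max},c}$ and is therefore impossible; in the convergent case one must have $h(\xi)=h_{max}$, i.e. $\xi=a_i$ for some $i\in\{1,\dots,l\}$.

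Finally I would compute the barycenter. Writing $Q_{\epsilon_n}(u_n)=c^{-2}\int_{\mathbb{R}^N}\chi(\epsilon_n(z+y_n))|u_n(z+y_n)|^2\,dz$, observe that $\chi$ is continuous and bounded by $\tilde r$, that $\epsilon_n(z+y_n)\to a_i$ uniformly on bounded sets, and that $a_i\in B_{\tilde r}(0)$ so $\chi(a_i)=a_i$. Combining this uniform convergence on balls with the strong $L^2$ convergence $u_n(\cdot+y_n)\to w$ (which supplies tightness of the mass, controlling the tails) yields $Q_{\epsilon_n}(u_n)\to a_i$. Since $a_i$ lies in the interior of $\overline{B_{\tilde\rho/2}(a_i)}\subset K_{\tilde\rho/2}$, for $n$ large $Q_{\epsilon_n}(u_n)\in K_{\tilde\rho/2}$, contradicting the choice of $u_n$. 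The main obstacle is the third step: passing to the limit in the $h$-weighted nonlocal term after translation and excluding the escaping case, where the strict monotonicity of $\Upsilon_{\gamma,c}$ together with the Hardy--Littlewood--Sobolev and dominated-convergence control are essential.
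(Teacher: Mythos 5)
Your proposal is correct and takes essentially the same approach as the paper's proof: a contradiction argument producing sequences $\rho_n\to 0$, $\epsilon_n\to 0$, $\{u_n\}\subset S(c)$, reduction to a minimizing sequence for $J_{h_{max}}$ on $S(c)$, compactness from Lemma \ref{lem2.2} after translations $y_n$, the dichotomy $|\epsilon_n y_n|\to+\infty$ versus $\epsilon_n y_n\to \xi$, exclusion of the escaping case by the strict inequality $\Upsilon_{h_\infty,c}>\Upsilon_{h_{max},c}$ of Lemma \ref{cor3.1}, identification of $\xi$ with some $a_i$ via ($h_3$), and the barycenter convergence $Q_{\epsilon_n}(u_n)\to a_i\in K_{\frac{\tilde{\rho}}{2}}$. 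The only differences are cosmetic, e.g.\ your extra detail on the Hardy--Littlewood--Sobolev and dominated-convergence justification for passing to the limit in the weighted Choquard term, which the paper asserts directly.
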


\begin{proof}
If the lemma does not occur, there must be $\rho_n\to 0,
\epsilon_n\to 0$ and $\{u_n\}\subset S(c)$ such that
\begin{equation}\label{e4.16}
\mathcal{E}_{\epsilon_n}(u_n)\leq \Upsilon_{h_{max},c}+\rho_n\
\text{and}\ Q_{\epsilon_n}(u_n)\not\in K_{\frac{\tilde{\rho}}{2}}.
\end{equation}
Consequently,
\begin{equation*}
\Upsilon_{h_{max},c}\leq  J_{h_{max}}(u_n)\leq
\mathcal{E}_{\epsilon_n}(u_n)\leq \Upsilon_{h_{max},c}+\rho_n,
\end{equation*}
then
\begin{equation*}
\{u_n\}\subset S(c)\ \text{and}\ J_{h_{max}}(u_n)\to
\Upsilon_{h_{max},c}.
\end{equation*}
According to Lemma \ref{lem2.2},  there exists $\{y_n\}\subset
\mathbb{R}^N$ such that $v_n(x) := u_n(x+ y_n)\to v$ in
$H^1(\mathbb{R}^N)$ for some $v\in S(c)$. Now we will study the two
cases: (I) $|\epsilon_ny_n|\to +\infty$ and (II) $\epsilon_ny_n\to
y$ for some $y\in \mathbb{R}^N$.

If (I) holds, the limit $v_n \to v$ in $H^1(\mathbb{R}^N)$ provides
\begin{equation*}
\begin{split}
\mathcal{E}_{\epsilon_n}(u_n)&=\frac{1}{2}\int_{\mathbb{R}^N}|\nabla
v_n|^2dx-\frac{\mu}{q}\int_{\mathbb{R}^N}|v_n|^{q}dx\\
&\quad -\frac{ N}{2(N+\alpha)}\int_{\mathbb{R}^N}(I_\alpha\ast
[h(\epsilon_n
x+\epsilon_ny_n)|v_n|^{\frac{N+\alpha}{N}}])h(\epsilon_n
x+\epsilon_ny_n)|v_n|^{\frac{N+\alpha}{N}}dx\\
&\to J_{h_\infty}(v)\ \text{as}\ n\to+\infty.
\end{split}
\end{equation*}
Since $\mathcal{E}_{\epsilon_n}(u_n)\leq
\Upsilon_{h_{max},c}+\rho_n$, we deduce that
\begin{equation*}
\Upsilon_{h_\infty,c}\leq J_{h_\infty}(v)\leq \Upsilon_{h_{max},c},
\end{equation*}
which contradicts $\Upsilon_{h_\infty,c}>\Upsilon_{h_{max},c}$ in
Lemma \ref{cor3.1}.

Now if (II) holds, then
\begin{equation*}
\begin{split}
\mathcal{E}_{\epsilon_n}(u_n)\to J_{h(y)}(v)\ \text{as}\
n\to+\infty,
\end{split}
\end{equation*}
which combined with $\mathcal{E}_{\epsilon_n}(u_n)\leq
\Upsilon_{h_{max},c}+\rho_n$ gives  that
\begin{equation*}
\Upsilon_{h(y),c}\leq J_{h(y)}(v)\leq \Upsilon_{h_{max},c}.
\end{equation*}
By Lemma \ref{cor3.1}, we must have $h(y)=h_{max}$ and $y=a_i$ for
some $i=1,2,\cdots, l$. Hence,
\begin{equation*}
\begin{split}
Q_{\epsilon_n}(u_n)=\frac{\int_{\mathbb{R}^N}\chi(\epsilon_n
x)|u_n|^2dx}{\int_{\mathbb{R}^N}|u_n|^2dx}
&=\frac{\int_{\mathbb{R}^N}\chi(\epsilon_n
x+\epsilon_ny_n)|v_n|^2dx}{\int_{\mathbb{R}^N}|v_n|^2dx}\\
&\to
\frac{\int_{\mathbb{R}^N}\chi(y)|v|^2dx}{\int_{\mathbb{R}^N}|v|^2dx}=a_i\in
K_{\frac{\tilde{\rho}}{2}},
\end{split}
\end{equation*}
which implies that $Q_{\epsilon_n}(u_n)\in
K_{\frac{\tilde{\rho}}{2}}$ for $n$ large enough. That contradicts
(\ref{e4.16}). The proof is complete.
\end{proof}

From now on, we will use the following notations:

\smallskip

$\bullet$\ $\theta_{\epsilon}^i:=\{u\in S(c):|Q_\epsilon(u)-a_i|\leq
\tilde{\rho}\}$;

\smallskip

$\bullet$\ $\partial\theta_{\epsilon}^i:=\{u\in
S(c):|Q_\epsilon(u)-a_i|=\tilde{\rho}\}$;

\smallskip

 $\bullet$\
$\beta_{\epsilon}^{i}:=\inf_{u\in\theta_{\epsilon}^i}\mathcal{E}_{\epsilon}(u)$;

\smallskip

$\bullet$\
$\tilde{\beta}_{\epsilon}^{i}:=\inf_{u\in\partial\theta_{\epsilon}^i}\mathcal{E}_{\epsilon}(u)$.

\begin{lemma}\label{lem4.7}
Let $N\geq 1$, $\alpha\in (0,N)$, $\epsilon, \mu, c>0$,
$2<q<2+\frac{4}{N}$, $h$ satisfy ($h_1$)-($h_3$), $\epsilon_2$ and
$\rho_1$ be obtained in Lemma \ref{lem4.6}. Then there exists
$\epsilon_3\in (0,\epsilon_2]$ such that
\begin{equation*}
\beta_{\epsilon}^i<\Upsilon_{h_{max},c}+\frac{\rho_1}{2}\
\text{and}\ \beta_{\epsilon}^{i}<\tilde{\beta}_{\epsilon}^{i},\
\text{for\ any\ }\epsilon\in(0,\epsilon_3).
\end{equation*}
\end{lemma}

\begin{proof}
Let $u\in S(c)$ be such that
\begin{equation*}
J_{h_{max}}(u)=\Upsilon_{h_{max},c}.
\end{equation*}
For $1\leq i\leq l$, we define
\begin{equation*}
\hat{u}_{\epsilon}^{i}(x):=u\left(x-\frac{a_i}{\epsilon}\right),\
x\in \mathbb{R}^N.
\end{equation*}
Then $\hat{u}_{\epsilon}^{i}\in S(c)$ for all $\epsilon>0$ and
$1\leq i\leq l$. Direct calculations give that
\begin{equation*}
\begin{split}
\mathcal{E}_{\epsilon}(\hat{u}_{\epsilon}^{i})&=\frac{1}{2}\int_{\mathbb{R}^N}|\nabla
u|^2dx-\frac{\mu}{q}\int_{\mathbb{R}^N}|u|^{q}dx\\
&\quad -\frac{ N}{2(N+\alpha)}\int_{\mathbb{R}^N}(I_\alpha\ast
[h(\epsilon x+a_i)|u|^{\frac{N+\alpha}{N}}])h(\epsilon
x+a_i)|u|^{\frac{N+\alpha}{N}}dx,
\end{split}
\end{equation*}
and then
\begin{equation}\label{e4.1}
\lim_{\epsilon\to
0^+}\mathcal{E}_{\epsilon}(\hat{u}_{\epsilon}^{i})=J_{h(a_i)}(u)=J_{h_{max}}(u)
=\Upsilon_{h_{max},c}.
\end{equation}
Note that
\begin{equation*}
Q_{\epsilon}(\hat{u}_{\epsilon}^i)=\frac{\int_{\mathbb{R}^N}\chi(\epsilon
x+a_i)|u|^2dx}{\int_{\mathbb{R}^N}|u|^2dx}\to a_i\text{\ as}\
\epsilon\to 0^+.
\end{equation*}
So $\hat{u}_{\epsilon}^i\in \theta_{\epsilon}^i$ for $\epsilon$
small enough, which combined with (\ref{e4.1}) implies that there
exists $\epsilon_3\in(0,\epsilon_2]$ such that
\begin{equation*}
\beta_{\epsilon}^i<\Upsilon_{h_{max},c}+\frac{\rho_1}{2}\text{\ for\
any}\ \epsilon\in (0,\epsilon_3).
\end{equation*}

For any $v\in \partial\theta_{\epsilon}^i$, that is, $v\in S(c)$ and
$|Q_\epsilon(v)-a_i|=\tilde{\rho}$, we obtain that
$|Q_\epsilon(v)\not\in K_{\frac{\tilde{\rho}}{2}}$. Thus, by Lemma
\ref{lem4.6},
\begin{equation*}
\mathcal{E}_{\epsilon}(v)>\Upsilon_{h_{max},c}+\rho_1\ \text{for\
all}\ v\in
\partial\theta_{\epsilon}^i\ \text{and}\ \epsilon\in (0,\epsilon_3),
\end{equation*}
which implies that
\begin{equation*}
\tilde{\beta}_{\epsilon}^{i}=\inf_{v\in\partial\theta_{\epsilon}^i}
\mathcal{E}_{\epsilon}(v)\geq \Upsilon_{h_{max},c}+\rho_1\
\text{for\ all}\ \epsilon\in(0,\epsilon_3).
\end{equation*}
Thus,
\begin{equation*}
\beta_{\epsilon}^{i}<\tilde{\beta}_{\epsilon}^{i}\ \text{for\ all\ }
\epsilon\in (0,\epsilon_3).
\end{equation*}
\end{proof}

Now we are ready to prove Theorem \ref{thm1.3}.

\smallskip

\textbf{Proof of Theorem \ref{thm1.3}}. Set
$\epsilon_0:=\epsilon_3$, where $\epsilon_3$ is obtained in Lemma
\ref{lem4.7}. Let $\epsilon\in (0,\epsilon_0)$. By Lemma
\ref{lem4.7}, for each $i\in \{1,2,\cdots, l\}$, we can use the
Ekeland variational principle to find a sequence $\{u_n^{i}\}\subset
\theta_{\epsilon}^i$ satisfying
\begin{equation*}
\mathcal{E}_{\epsilon}(u_n^i)\to \beta_{\epsilon}^i\ \text{and}\
\|\mathcal{E}_{\epsilon}|_{S(c)}'(u_n^i)\|\to 0\ \text{as}\ n\to
+\infty,
\end{equation*}
that is, $\{u_n^i\}_n$ is a $(PS)_{\beta_{\epsilon}^i}$ sequence for
$\mathcal{E}_{\epsilon}$ restricted to $S(c)$. Since
$\beta_{\epsilon}^i< \Upsilon_{h_{max},c}+\rho_0$, it follows from
Lemma \ref{lem4.3} that there exists  $u^i$ such that $u_n^i\to u^i$
in $H^1(\mathbb{R}^N)$. Thus
\begin{equation*}
u^i\in \theta_{\epsilon}^i,\
\mathcal{E}_{\epsilon}(u^i)=\beta_{\epsilon}^i<0\ \text{and}\
\mathcal{E}_{\epsilon}|_{S(c)}'(u^i)=0.
\end{equation*}

As
\begin{equation*}
Q_{\epsilon}(u^i)\in \overline{B_{\tilde{\rho}}(a_i)},\
Q_{\epsilon}(u^j)\in \overline{B_{\tilde{\rho}}(a_j)},
\end{equation*}
and
\begin{equation*}
\overline{B_{\tilde{\rho}}(a_i)}\cap
\overline{B_{\tilde{\rho}}(a_j)}=\emptyset\ \text{for}\ i\neq j,
\end{equation*}
we conclude that $u^i\not\equiv u^j$ for $1\leq i, j \leq  l$ and
$i\neq j$. Therefore, $\mathcal{E}_{\epsilon}$ has at least $l$
nontrivial critical points for all $\epsilon\in  (0, \epsilon_0)$.
So there exists $\lambda_i\in \mathbb{R}$ such that
\begin{equation*}
-\Delta u^i+\lambda_i u^i=(I_\alpha\ast [h(\epsilon
x)|u^i|^{\frac{N+\alpha}{N}}])h(\epsilon
x)|u^i|^{\frac{N+\alpha}{N}-2}u^i+\mu|u^i|^{q-2}u^i,\quad x\in
\mathbb{R}^N.
\end{equation*}

By using $\mathcal{E}_{\epsilon}(u^i)=\beta_{\epsilon}^i<0$ and
$\mathcal{E}_{\epsilon}'(u^i)u^i+\lambda_ic^2=0$, we obtain that
\begin{equation*}
\begin{split}
\frac{1}{2}\lambda_ic^2&=-\mathcal{E}_{\epsilon}(u^i)+\left(\frac{1}{2}-\frac{1}{q}\right)\mu
\int_{\mathbb{R}^N}|u^i|^qdx\\
&\qquad+\left(\frac{1}{2}-\frac{
N}{2(N+\alpha)}\right)\int_{\mathbb{R}^N}(I_\alpha\ast [h(\epsilon
x)|u^i|^{\frac{N+\alpha}{N}}])h(\epsilon
x)|u^i|^{\frac{N+\alpha}{N}}dx,
\end{split}
\end{equation*}
which implies that $\lambda_i>0$ for $i=1,2,\cdots,l$. The proof is
complete.

\bigskip

\textbf{Acknowledgements.}  This work is supported by the National
Natural Science Foundation of China (No. 12001403).



\end{document}